\newcommand{\R}{\mathbb{R}}
\newcommand{\C}{\mathbb{C}}
\newcommand{\N}{\mathbb{N}}
\renewcommand{\S}{\mathcal{S}}
\newcommand{\vspan}{\operatorname{span}}
\DeclareMathOperator{\tr}{tr}
\def\th@plain{%
  \thm@notefont{}
  \itshape 
}
\def\th@definition{%
  \thm@notefont{}
  \normalfont 
}
\theoremstyle{plain}
\newtheorem{lause}[subsubsection]{Theorem}
\newtheorem{lem}[subsubsection]{Lemma}
\newtheorem{kor}[subsubsection]{Corollary}
\theoremstyle{definition}
\newtheorem{konj}[subsubsection]{Conjecture}
\newtheorem{kys}[subsubsection]{Question}
\theoremstyle{remark}
\title{Planes in Schatten-$3$}
\author{Otte Heinävaara}
\address{Department of Mathematics, Princeton University, Princeton, NJ 08544}
\email{oeh@math.princeton.edu}
\begin{document}

\begin{abstract}
    We prove that any two-dimensional real subspace of Schatten-3 can be linearly isometrically embedded into $L_{3}$. This resolves the case $p = 3$ of Hanner's inequality for Schatten classes, conjectured by Ball, Carlen and Lieb. We conjecture that similar isometric embedding is possible for any $p \geq 1$.
\end{abstract}

\maketitle

\section{Introduction}

For $p \geq 1$, let $\S_{p}$ denote the Schatten-$p$ trace-class; the space of compact operators on complex separable Hilbert space $H$ for which the sequence of singular values  $(\sigma_{i}(A))_{i = 1}^{\infty}$ belongs to $\ell_{p}$, with the norm
\begin{align}\label{schattennorm}
    \|A\|_{\S_{p}} = \left(\sum_{i = 1}^{\infty} \sigma_{i}(A)^{p}\right)^{1/p} = \left(\tr (A^{*}A)^{p/2}\right)^{1/p}.
\end{align}

Our main result is the following.

\begin{lause}\label{mainemb}
    For any $A, B \in \S_{3}$ the real space $\vspan_{\R}\{A, B\}$ (with $\S_{3}$ norm) can be linearly isometrically embedded into $L_{3}$.
\end{lause}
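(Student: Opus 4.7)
The plan is to recast the isometric embedding as a positivity statement for a measure on the projective line, and then verify this via spectral analysis of the Hermitian pencil $M(\theta) := \cos\theta\, A + \sin\theta\, B$. Two standard reductions come first: by the Hermitian dilation $C \mapsto \left(\begin{smallmatrix} 0 & C \\ C^{*} & 0 \end{smallmatrix}\right)$, which multiplies the $\S_{3}$-norm by $2^{1/3}$ and is compatible with linear combinations, I may assume $A, B$ are self-adjoint; by an approximation and weak-compactness argument on the space of embeddings, I may further assume $A, B$ have finite rank.

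Setting $F(\theta) := \|M(\theta)\|_{\S_{3}}^{3}$ and $K(\theta) := |\cos\theta|^{3}$, I claim the desired isometric embedding of $\vspan_{\R}\{A, B\}$ into $L_{3}$ exists if and only if there is a positive finite measure $\nu$ on $\R/\pi\Z$ with $F = K \ast \nu$. The forward direction takes any $L_{3}$-valued embedding $(A, B) \mapsto (f, g)$ and pushes $\mu$ forward through $(f, g): \Omega \to \R^{2}$; passing to polar coordinates and absorbing the cubed radius into a measure on angles produces the desired $\nu$. Conversely, any such $\nu$ gives the explicit embedding $xA + yB \mapsto x\cos(\cdot) + y\sin(\cdot)$ into $L_{3}(\R/\pi\Z, \nu)$. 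The Fourier coefficients of $K$ on $\R/\pi\Z$ can be computed explicitly --- the coefficient of $\cos(2n\theta)$ is $\frac{24(-1)^{n}}{\pi(4n^{2}-1)(4n^{2}-9)}$ for $n \geq 1$ and the constant term is $\frac{4}{3\pi}$ --- and are all nonzero, so the convolution equation $F = K \ast \nu$ uniquely determines a formal distribution $\nu$ via Fourier deconvolution. The theorem reduces to showing this $\nu$ is a nonnegative finite measure.

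The natural approach to the positivity is to construct $\nu$ explicitly from the spectral structure of the pencil. By Feynman--Hellmann, each eigenvalue of $M(\theta)$ satisfies $\lambda_{i}(\theta) = \cos\theta\, a_{i}(\theta) + \sin\theta\, b_{i}(\theta)$ with $a_{i}(\theta) := \langle Av_{i}(\theta), v_{i}(\theta)\rangle$, $b_{i}(\theta) := \langle Bv_{i}(\theta), v_{i}(\theta)\rangle$, where $v_{i}(\theta)$ is the corresponding normalized eigenvector; so the planar trajectories $(a_{i}(\theta), b_{i}(\theta)) \in \R^{2}$ carry the ``instantaneous slope'' data of each eigenvalue branch. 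One then tries to exhibit
\[
F(\theta) = \sum_{i} \int_{0}^{\pi} w_{i}(\theta')\, |\cos\theta\, a_{i}(\theta') + \sin\theta\, b_{i}(\theta')|^{3}\, d\theta'
\]
for explicit nonnegative weights $w_{i}(\theta')$. Such a representation immediately yields the $L_{3}$ embedding with measure $\mu = \sum_{i} w_{i}(\theta')\,\delta_{(a_{i}(\theta'), b_{i}(\theta'))}\, d\theta'$ on $\R^{2}$.

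Establishing this positivity --- equivalently, proving that the Fourier-inverted $\nu$ is a bona fide measure of nonnegative sign --- is the central obstacle and likely constitutes the bulk of the paper. It encodes a delicate identity about $\tr|M(\theta)|^{3}$ that is acutely sensitive to the exponent $p = 3$ (consistent with the fact, noted in the abstract, that the analogous conjecture for general $p$ remains open). I expect the resolution to exploit an operator integral representation of $|\lambda|^{3}$ --- for instance $|\lambda|^{3} = \tfrac{2}{\pi}\int_{-\infty}^{\infty}\lambda^{6}(s^{2}+\lambda^{2})^{-2}\, ds$ --- applied with $\lambda = M(\theta)$, together with resolvent identities for $(s^{2} + M(\theta)^{2})^{-1}$, massaged until the positivity of $\nu$ becomes manifest.
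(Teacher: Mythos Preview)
Your reductions (Hermitian dilation, finite-rank approximation) and your reformulation of the embedding as existence of a positive measure $\nu$ with $F = K * \nu$ are correct and essentially equivalent to the paper's setup: the paper works in affine coordinates and invokes a Koldobsky-type criterion (Lemma~\ref{lemKol}) that the norm embeds into $L_3$ provided the $4$th distributional derivative of $t \mapsto \|tv+w\|^3$ is a positive measure with finite third moment. Since $(|t|^3)^{(4)} = 12\,\delta_0$, this $4$th-derivative positivity is exactly your deconvolved $\nu \geq 0$ in different coordinates; your Fourier uniqueness observation matches the uniqueness result the paper cites from Koldobsky.

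The genuine gap is that you have not proved the positivity, and your two suggested routes are not what the paper does (nor is it clear they would succeed). The Feynman--Hellmann ``trajectory'' representation you write down is not an identity --- the instantaneous data $(a_i(\theta'), b_i(\theta'))$ only record $\lambda_i$ and $\lambda_i'$ at $\theta'$, so $|\cos\theta\,a_i(\theta') + \sin\theta\,b_i(\theta')|^3$ does not reconstruct $|\lambda_i(\theta)|^3$; one cannot expect such a formula to hold with any choice of weights $w_i$. The resolvent-integral idea for $|\lambda|^3$ is a plausible heuristic but you give no indication of how to extract a sign.

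What the paper actually does for the positivity is concrete and combinatorial. It computes $\bigl(\tr|A+tB|^3\bigr)^{(4)}$ at a nonsingular point via the divided-difference identity
\[
\frac{F^{(k)}(0)}{k!} = \frac{1}{k}\sum_{i_1,\ldots,i_k} [\lambda_{i_1},\ldots,\lambda_{i_k}]_{f'}\, B_{i_1 i_2}\cdots B_{i_k i_1}
\]
with $k=4$ and $f'=3x|x|$. The third divided differences of $x|x|$ admit closed forms depending only on the sign pattern of the arguments; grouping the $n^4$ terms by the cyclic sign pattern of $(\lambda_{i_1},\ldots,\lambda_{i_4})$ and using $B_{ji}=\overline{B_{ij}}$, each group collapses to an explicit sum of squares (formulas~(\ref{form1}) and~(\ref{form2})). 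The singular points contribute nonnegative $\delta$-masses by a local eigenvalue expansion, and a decay lemma gives the moment condition. None of this uses resolvents or integral representations of $|\lambda|^3$; the mechanism is a hands-on sum-of-squares certificate specific to $p=3$.
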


In other words, for any $A, B \in \S_{3}$ (or $n \times n$ matrices $A$ and $B$) there exits $f, g \in L_{3}(0, 1)$ such that for any $\alpha, \beta \in \R$ one has
\begin{align*}
    \|\alpha A + \beta B\|_{\S_{3}} = \|\alpha f + \beta g\|_{L_{3}}.
\end{align*}

As a consequence, to prove any inequality that only depends on $\S_{3}$-norms of real linear combinations of two matrices, it is enough to consider diagonal matrices. This implies, for instance, that $\S_{3}$ has the same modulus of uniform convexity and smoothness as $\ell_{3}$, fact first proven by Ball, Carlen and Lieb in 1994 \cite{ball1994sharp}.

Theorem \ref{mainemb} also implies Hanner's inequality for $\S_{3}$. Namely, for any pair of $n \times n$ matrices $A$ and $B$, the following inequality holds for $p = 3$:
\begin{align}\label{hanner1}
    \|A + B\|_{\S_p}^{p} + \|A - B\|_{\S_p}^{p} \leq (\|A\|_{\S_p} + \|B\|_{\S_p})^{p} + |\|A\|_{\S_p} - \|B\|_{\S_p}|^{p}.
\end{align}

This inequality was first considered by Hanner in 1955 \cite{hanner1956uniform} for $L_{p}$. Hanner proved that for $p \geq 2$ and $f, g \in L_{p}(0, 1)$ one has
\begin{align}
    \|f + g\|_{L_p}^{p} + \|f - g\|_{L_p}^{p} \leq (\|f\|_{L_p} + \|g\|_{L_p})^{p} + |\|f\|_{L_p} - \|g\|_{L_p}|^{p}.
\end{align}
Hanner also proved that the reverse inequality holds for $1 \leq p \leq 2$. These inequalities were then used to estimate the modulus of uniform convexity of $L_{p}$.

The non-commutative version (\ref{hanner1}) was first discussed by Ball, Carlen and Lieb in \cite{ball1994sharp}; the inequality (\ref{hanner1}) was proved for $p \geq 4$ and the reverse inequality, i.e. for any pair of $n \times n$ matrices $A$ and $B$
\begin{align}\label{hanner2}
    \|A + B\|_{\S_p}^{p} + \|A - B\|_{\S_p}^{p} \geq (\|A\|_{\S_p} + \|B\|_{\S_p})^{p} + |\|A\|_{\S_p} - \|B\|_{\S_p}|^{p}
\end{align}
was then proved by duality (see \cite[Lemma $6$]{ball1994sharp}) for $1 \leq p \leq 4/3$. While the authors managed to find the modulus of uniform convexity of $\S_{p}$ for every $1 < p < \infty$ using other means, it was conjectured that the inequalities (\ref{hanner1}) and (\ref{hanner2}) hold in full ranges $p \geq 2$ and $1 \leq p \leq 2$, respectively. Despite serious effort, these ranges of $p$ have not been improved until now\footnote{See however \cite{mc1967c_rho}, \cite{ball1994sharp} and \cite{chayes2020matrix} for partial results with special assumptions on $A$ and $B$.}. In addition to the case $p = 3$, the aforementioned duality argument then also implies (\ref{hanner2}) for $p = 3/2$. Understanding the non-commutative Hanner's inequalities (\ref{hanner1}) and (\ref{hanner2}) was the original motivation for the main result Theorem \ref{mainemb}.

We make the following obvious conjecture.

\begin{konj}\label{mainconj}
    For any $p \geq 1$ and $A, B \in \S_{p}$ the real space $\vspan_{\R}\{A, B\}$ can be linearly isometrically embedded into $L_{p}$.
\end{konj}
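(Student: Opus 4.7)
The plan is to translate the embedding problem into a pointwise inequality for a fourth-order differential operator applied to
\[
\phi(\theta) := \|\cos\theta\, A + \sin\theta\, B\|_{\S_3}^{3}.
\]
First I would reduce to the finite-dimensional self-adjoint case via the standard doubling trick and compactness; writing $M(\theta) := \cos\theta\, A + \sin\theta\, B$, one has $\phi(\theta) = \tr\,|M(\theta)|^3$. An isometric embedding of $\vspan_{\R}\{A, B\}$ into $L_3$ exists if and only if there is a non-negative Borel measure $\nu$ on $[0, 2\pi)$ with
\[
\phi(\theta) = \int_{0}^{2\pi} |\cos(\theta - \psi)|^3\, d\nu(\psi),
\]
since any pair $(f, g) \in L_3 \times L_3$ can be written in polar form $(f, g) = r\,(\cos\psi, \sin\psi)$ on its domain, yielding such a representation with $d\nu = r^3\, d\mu$; conversely, $f(\psi) := \cos\psi$, $g(\psi) := \sin\psi$ on $([0, 2\pi), \nu)$ realizes any such representation as an embedding.

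The above is a circular convolution $\phi = K * \nu$ with kernel $K(\theta) := |\cos\theta|^3$. Letting $L := (\partial^2 + 1)(\partial^2 + 9) = \partial^4 + 10\partial^2 + 9$, the identity $\cos^3\theta = \tfrac{1}{4}(3\cos\theta + \cos 3\theta)$ shows that $K \in \ker L$ on each open arc where $\cos\theta \neq 0$. At the two zeros of $\cos\theta$ on the circle the third derivative of $K$ has a jump of size $12$, so distributionally
\[
L K = 12\,\bigl(\delta_{\pi/2} + \delta_{3\pi/2}\bigr).
\]
Applying $L$ to $\phi = K * \nu$ and using that $\nu$ may be taken $\pi$-periodic gives the explicit inversion formula $\nu(\psi) = \tfrac{1}{24}\, L\phi(\psi + \pi/2)$. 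Thus the theorem is equivalent to the pointwise inequality
\[
L\phi(\theta) = \phi^{(4)}(\theta) + 10\,\phi''(\theta) + 9\,\phi(\theta) \geq 0 \quad \text{for all } \theta.
\]

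To attack this inequality, I would use Daleckii--Krein derivative formulas together with the identity $M''(\theta) = -M(\theta)$. As a warm-up, with $f(x) := |x|^3$ one obtains
\[
\phi''(\theta) + 3\,\phi(\theta) = \sum_{i,j} f'[\lambda_i, \lambda_j]\,\bigl|\langle v_i,\, M'(\theta)\, v_j\rangle\bigr|^2,
\]
where $(\lambda_i, v_i)$ is a spectral resolution of $M(\theta)$; this is manifestly non-negative because $f'(x) = 3x|x|$ is monotone, so every divided difference $f'[\lambda_i, \lambda_j]$ is $\geq 0$. The fourth-order analogue $L\phi$ should admit a similar expansion, but now involving divided differences of $f''(x) = 6|x|$ and $f'''(x) = 6\,\sign(x)$, whose divided differences have mixed sign. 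The hard part will be extracting positivity in this more delicate setting: I would try to use $M'' = -M$ to force enough cancellations that $L\phi$ emerges as a manifestly non-negative combination---say, a sum of squares weighted by non-negative spectral quantities. Expect the argument to depend on an algebraic identity special to $p = 3$; the commuting case (where $\nu$ reduces to an atomic measure supported at the angles $\arctan(b_i/a_i)$ with weights $(a_i^2 + b_i^2)^{3/2}$) and the $2 \times 2$ case should serve as guides for the general computation.
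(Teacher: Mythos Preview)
First, a framing point: the statement you were handed is the paper's \emph{conjecture}, which is open for general $p$; the paper proves only $p=3$ (Theorem~\ref{mainemb}), and your proposal likewise treats only $p=3$. So the relevant comparison is between your sketch and the paper's proof of Theorem~\ref{mainemb}.

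Your circular reformulation is correct and is essentially a reparametrization of what the paper does on the line. The paper looks at $T(t)=\tr|A+tB|^{3}$ and, via Koldobsky's criterion (Lemma~\ref{lemKol}) together with the decay lemma (Lemma~\ref{lem_decay}), reduces embeddability to showing that the fourth distributional derivative $T^{(4)}$ is a non-negative measure. Your condition $L\phi\ge 0$ with $L=(\partial^{2}+1)(\partial^{2}+9)$ is the same criterion transported to $S^{1}$ via $t=\tan\theta$, $\phi(\theta)=|\cos\theta|^{3}\,T(\tan\theta)$; the advantage is that the circle is compact (no separate decay lemma needed), the cost is the lower-order terms $10\partial^{2}+9$. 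Your inversion computation $LK=12(\delta_{\pi/2}+\delta_{3\pi/2})$ and the warm-up $\phi''+3\phi\ge 0$ are both fine.

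The genuine gap is that you do not prove $L\phi\ge 0$; you only announce a plan. This is where all the content lies. In the paper's linear picture the analogous inequality $T^{(4)}\ge 0$ (Theorem~\ref{posres}) is obtained as follows: Lemma~\ref{derivative_identity} expresses $T^{(4)}(0)$ as a quadruple sum $\sum_{i_{1},\ldots,i_{4}}[\lambda_{i_{1}},\ldots,\lambda_{i_{4}}]_{g}\,B_{i_{1}i_{2}}B_{i_{2}i_{3}}B_{i_{3}i_{4}}B_{i_{4}i_{1}}$ with $g(x)=x|x|$; the divided differences of $g$ at four points are computed explicitly (identities \eqref{easy_terms1}--\eqref{easy_terms2}); the sum is then split according to the cyclic sign pattern of $(\lambda_{i_{1}},\ldots,\lambda_{i_{4}})$; and the two nontrivial groups are rewritten as the explicit sums of squares \eqref{form1} and \eqref{form2}. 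None of this is visible from ``Daleckii--Krein plus $M''=-M$'': the sign-pattern splitting and the particular quadratic identities are the heart of the matter, and your outline gives no indication of how they would emerge. The expectation that $L\phi$ will be ``a sum of squares weighted by non-negative spectral quantities'' is correct in hindsight, but finding that decomposition is precisely the work you have not done.
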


If true, this would imply the non-commutative Hanner's inequalities (\ref{hanner1}) and (\ref{hanner2}) for full ranges of $p$. More generally, to prove any $\S_{p}$-inequality depending only on norms of elements in a plane, it would be enough to consider commuting/diagonal operators. See section \ref{discuss} for some discussion and further partial results towards this conjecture. 

We also note that similar isometric embedding is not possible in general for $3$-dimensional subspaces, for any $p \geq 1$.

\begin{lause}\label{negthm}
    For any $1 \leq p < \infty$, $p \neq 2$ there exists a $3$-dimensional subspace of $\S_{p}$ which is not linearly isometric to any subspace of $L_{p}$. In fact one may consider the space of real symmetric $2 \times 2$ matrices.
\end{lause}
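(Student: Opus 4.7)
The plan is to reduce to a rigidity statement for isometric $\ell_{p}^{2}$ embeddings into $L_{p}$, and then derive a contradiction from a Euclidean $2$-plane inside the space of real symmetric $2 \times 2$ matrices. With the basis $e_{1} = I$, $e_{2} = \mathrm{diag}(1, -1)$, $e_{3} = \bigl(\begin{smallmatrix} 0 & 1 \\ 1 & 0 \end{smallmatrix}\bigr)$, the matrix $\alpha e_{1} + \beta e_{2} + \gamma e_{3}$ has eigenvalues $\alpha \pm r$ with $r := \sqrt{\beta^{2} + \gamma^{2}}$, so $\|\alpha e_{1} + \beta e_{2} + \gamma e_{3}\|_{\S_{p}}^{p} = |\alpha + r|^{p} + |\alpha - r|^{p}$. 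The planes $\vspan(e_{1}, e_{2})$ and $\vspan(e_{1}, e_{3})$ consist of commuting matrices and carry the norm $(|\alpha + \beta|^{p} + |\alpha - \beta|^{p})^{1/p}$, which is the standard $\ell_{p}^{2}$ norm after switching to the diagonal basis $((e_{1} + e_{2})/2, (e_{1} - e_{2})/2)$ (respectively $((e_{1} \pm e_{3})/2)$). The plane $\vspan(e_{2}, e_{3})$ has eigenvalues $\pm r$, giving the scaled Euclidean norm $2^{1/p}\sqrt{\beta^{2} + \gamma^{2}}$.

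Suppose toward contradiction that $e_{i} \mapsto f_{i}$ is an isometric embedding into $L_{p}$. The key input is the following rigidity lemma: for $p \in [1, \infty) \setminus \{2\}$, any $F_{+}, F_{-} \in L_{p}$ satisfying $\|\mu F_{+} + \nu F_{-}\|_{p}^{p} = |\mu|^{p} + |\nu|^{p}$ for all $\mu, \nu \in \R$ have disjoint supports. For $p = 1$ this is immediate from the triangle-equality cases of $\|F_{+} \pm F_{-}\|_{1} = 2$. For $p > 2$, differentiating $\|F_{+} + \nu F_{-}\|_{p}^{p} = 1 + |\nu|^{p}$ twice at $\nu = 0$ yields $\int |F_{+}|^{p-2} F_{-}^{2}\,d\mu = 0$, hence $F_{-} = 0$ on $\{F_{+} \neq 0\}$. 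For $1 < p < 2$, the $|\nu|^{p-1}$-asymptotic of the first $\nu$-derivative as $\nu \to 0^{+}$ can only come from the set $\{F_{+} = 0\}$, forcing $\int_{\{F_{+} = 0\}} |F_{-}|^{p}\,d\mu = 1 = \|F_{-}\|_{p}^{p}$.

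Applying the lemma to $F_{\pm} = (f_{1} \pm f_{2})/2$ produces a $\{\pm 1\}$-valued function $\sigma$ on $\mathrm{supp}(f_{1})$ with $f_{2} = \sigma f_{1}$; the $(e_{1}, e_{3})$-plane similarly yields a $\{\pm 1\}$-valued $\tau$ with $f_{3} = \tau f_{1}$. Splitting $\mathrm{supp}(f_{1})$ according to the sign of $\sigma\tau$ and setting $M_{\pm} := \int_{\{\sigma\tau = \pm 1\}} |f_{1}|^{p}\, d\mu$, the Euclidean plane $\vspan(e_{2}, e_{3})$ then forces
\begin{equation*}
M_{+} |\beta + \gamma|^{p} + M_{-} |\beta - \gamma|^{p} = \|\beta f_{2} + \gamma f_{3}\|_{p}^{p} = 2(\beta^{2} + \gamma^{2})^{p/2}.
\end{equation*}
Evaluating at $(\beta, \gamma) = (1, 0)$, $(1, 1)$ and $(1, -1)$ yields $M_{+} + M_{-} = 2$ and $M_{\pm} = 2^{1 - p/2}$, so $2^{2 - p/2} = 2$, forcing $p = 2$, a contradiction.

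The main obstacle is the rigidity lemma in the range $1 < p < 2$, where the absence of a usable second Taylor coefficient forces one to work with the $|\nu|^{p-1}$-asymptotic of the first derivative; the remaining steps are routine once the lemma is in hand.
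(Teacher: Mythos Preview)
Your argument is correct and complete (the $1<p<2$ case of the rigidity lemma can be made rigorous exactly as you indicate: splitting off the $\{F_{+}=0\}$ contribution and using dominated convergence via the elementary bound $|a+t|^{p}+|a-t|^{p}-2|a|^{p}\le 2|t|^{p}$ for $1\le p\le 2$ shows that the symmetrized remainder on $\{F_{+}\neq 0\}$ is $o(|\nu|^{p})$, forcing $\int_{\{F_{+}=0\}}|F_{-}|^{p}=1$). The subsequent deduction that $f_{2}=\sigma f_{1}$, $f_{3}=\tau f_{1}$ and the contradiction from the Euclidean plane are clean.

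Your route, however, is genuinely different from the paper's. The paper works with the spherical representation $\|v\|^{p}=\int_{S^{2}}|\langle v,u\rangle|^{p}\,d\mu(u)$: for $p$ not an even integer it invokes Koldobsky's uniqueness theorem for two-dimensional sections and observes that the pushforward of $\mu$ under the family of projections $(t_{1},t_{2},t_{3})\mapsto(\cos\theta\, t_{1}+\sin\theta\, t_{2},t_{3})$ would have to land on four fixed points of $S^{1}$ for every $\theta$, which is impossible; for even $p\ge 4$ it instead expands $|z+r|^{p}+|z-r|^{p}$ at $r=0$, averages over $\theta$, extracts three moment identities for $\mu$, and combines them into $\mu\bigl((2t_{3}^{2}-1)^{2}t_{3}^{p-4}\bigr)<0$. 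Your proof replaces both cases by a single elementary step---the disjoint-support rigidity of $\ell_{p}^{2}$ inside $L_{p}$ for $p\neq 2$---and then reads off the contradiction directly from the three test points $(1,0),(1,1),(1,-1)$ in the $(e_{2},e_{3})$-plane. The payoff is that you avoid the external uniqueness result and the even/odd case split; the paper's formulation, on the other hand, stays in the measure-on-the-sphere language that drives the positive results of the paper and makes the obstruction visibly ``geometric'' (the rotational symmetry of the Schatten norm in $(x,y)$ is incompatible with any finitely supported representing measure).
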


The main result Theorem \ref{mainemb} is closely related to the following new result of independent interest.

\begin{lause}\label{der4}
    Let $f : (a, b) \to \R$ be a function with non-negative $4$th derivative (in distributional sense). Then for any pair of Hermitian $n \times n$ matrices $A$ and $B$ the function
    \begin{align*}
        t \mapsto \tr f(A + t B)
    \end{align*}
    also has non-negative $4$th derivative.
\end{lause}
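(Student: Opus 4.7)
The plan is to reduce the theorem to a single atomic $f$ and then establish the reduced statement by spectral perturbation methods. \textbf{First}, by mollifying $f$ with a smooth non-negative kernel we may assume $f \in C^\infty$ with $f^{(4)} \geq 0$ pointwise; the general statement follows by passing to the limit. The Peano--Taylor representation
\[
f(x) = P_3(x) + \frac{1}{6}\int (x-s)_+^3 \, d\mu(s),
\]
with $\mu = f^{(4)} \geq 0$ a non-negative Borel measure and $P_3$ a polynomial of degree $\leq 3$, together with the fact that $t \mapsto \tr P_3(A+tB)$ is polynomial of degree $\leq 3$ in $t$, reduces the problem by linearity to each atom $f_s(x) = (x-s)_+^3$. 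Using $2(x-s)_+^3 = (x-s)^3 + |x-s|^3$ and discarding the cubic piece, and then translating $A \mapsto A - sI$, the task becomes to show that
\[
\phi(t) := \tr |A+tB|^3 = \|A+tB\|_{\S_3}^3
\]
has non-negative distributional fourth derivative.

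\textbf{Second}, away from the discrete set of times $t$ at which $A+tB$ has $0$ as an eigenvalue, $\phi$ is smooth in $t$. By the Daleckii--Krein formula for Fr\'echet derivatives of operator functions and cyclic re-summation (the products $B_{i_0 i_1} B_{i_1 i_2} B_{i_2 i_3} B_{i_3 i_0}$ are invariant under cyclic permutations of $(i_0,i_1,i_2,i_3)$), one obtains
\[
\phi^{(4)}(t) = 6 \sum_{i_0, i_1, i_2, i_3} f'[\lambda_{i_0}, \lambda_{i_1}, \lambda_{i_2}, \lambda_{i_3}] \, B_{i_0 i_1} B_{i_1 i_2} B_{i_2 i_3} B_{i_3 i_0},
\]
where $\lambda_i = \lambda_i(t)$ are the eigenvalues of $A+tB$, $B_{ij}$ the matrix elements of $B$ in the corresponding eigenbasis, and $f'[\cdot,\cdot,\cdot,\cdot]$ denotes the third divided difference of $f'$. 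These coefficients are non-negative since $(f')''' = f^{(4)} \geq 0$. At a time $t_0$ where some $\lambda_i$ crosses zero with $\lambda_i'(t_0) \neq 0$ (the generic situation), a direct perturbation computation shows that $\phi$ is only $C^2$ at $t_0$ and the jump of its third derivative equals $12 |\lambda_i'(t_0)|^3 \geq 0$; the corresponding Dirac mass in the distributional fourth derivative is hence non-negative, and non-generic crossings can be handled by continuity.

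\textbf{The main obstacle} is the non-negativity of the quartic form at regular $t$: the coefficients are non-negative but the $B$-products $B_{i_0 i_1} B_{i_1 i_2} B_{i_2 i_3} B_{i_3 i_0}$ take both signs, so non-negativity must arise from subtle cancellation. The approach I would pursue is to seek an integral representation of $f'[\lambda_0, \ldots, \lambda_3]$ (specific to $f = |x|^3$, i.e. an explicit B-spline formula) against a non-negative measure whose integrand separates as a product $h(s,\lambda_0) h(s,\lambda_1) h(s,\lambda_2) h(s,\lambda_3)$: this would recast the sum as a non-negative integral of $\tr(H(s) B)^4$ with $H(s) = h(s, A+tB) \succeq 0$, each term being manifestly non-negative. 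The B-spline representation of the third divided difference does not factor in this way, so the argument must exploit additional structure---for instance a decomposition of $A+tB$ into its positive and negative spectral parts combined with an algebraic identity organising the contributions by the sign pattern of $(\lambda_{i_0}, \ldots, \lambda_{i_3})$. An alternative, potentially cleaner route is the operator-level statement $\frac{d^4}{dt^4}|A+tB|^3 \succeq 0$, which would imply the trace version at once; this is a much stronger claim and its validity is not obvious, so I expect the final proof to combine the reductions above with a new positivity identity tailored to $f = |x|^3$.
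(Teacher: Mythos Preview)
Your reductions to the atom $f(x)=|x|^3$ and the derivative formula via divided differences match the paper's approach exactly, and your treatment of the singular crossings is essentially right (the paper uses Rellich's analyticity of eigenvalues to handle all crossings uniformly, assuming first that $B$ is invertible and removing this by approximation at the end). However, there is a genuine gap: you have correctly isolated the main obstacle---the non-negativity of the quartic form
$\sum_{i_0,\dots,i_3} [\lambda_{i_0},\dots,\lambda_{i_3}]_{f'}\, B_{i_0 i_1}B_{i_1 i_2}B_{i_2 i_3}B_{i_3 i_0}$
at regular $t$---but you have not resolved it, only sketched directions. Everything in your write-up up to that point is preparatory; the theorem is not proved without this step.

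The paper fills the gap by precisely the sign-pattern decomposition you anticipate in your last paragraph. With $g=f'/3=x|x|$, one computes the third divided differences of $g$ explicitly: they vanish when all four arguments share a sign, and otherwise are explicit rational functions (e.g.\ $[a_1,a_2,a_3,b_1]_g = 2b_1^2/\prod_j(b_1-a_j)$ for $a_j<0<b_1$). Splitting the sum over $(i_0,\dots,i_3)$ according to the cyclic sign pattern of $(\lambda_{i_0},\dots,\lambda_{i_3})$, the all-$+$ and all-$-$ contributions vanish, and the remainder separates into two pieces: patterns with exactly two cyclic sign changes (types $(+,+,-,-)$, $(+,-,-,-)$, $(-,+,+,+)$ and their rotations) and patterns with four ($(+,-,+,-)$). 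Each piece is then rewritten, after relabeling indices and using $B_{j,i}=\overline{B_{i,j}}$, as an explicit sum of squares---these are the identities (\ref{form1}) and (\ref{form2}) in the paper. Your hoped-for factorization into an integral of $\tr(H(s)B)^4$ does not materialize (as you suspected), and the operator-level strengthening $\frac{d^4}{dt^4}|A+tB|^3 \succeq 0$ is not used; the positivity is genuinely a trace phenomenon coming from these tailored algebraic identities, and producing them is the actual content of the proof.
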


An analogous result was known earlier for second derivatives (convex functions), and first derivatives (increasing functions) with the additional assumption that $B$ is positive semidefinite (see for instance \cite[Propositions $1$ and $2$]{petz1994survey}). We also prove similar result for third derivatives, and conjecture that an analogous statement holds true for derivatives of any order, under the additional assumption that $B$ is positive semidefinite for odd orders.

\begin{lause}\label{der3}
    Let $f : (a, b) \to \R$ be a function with non-negative $3$rd derivative (in distributional sense). Then for any $n \times n$ matrices $A, B$ with $B \geq 0$ the function
    \begin{align*}
        t \mapsto \tr f(A + t B)
    \end{align*}
    also has non-negative $3$rd derivative.
\end{lause}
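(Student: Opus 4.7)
The plan is to reduce Theorem \ref{der3} to a positivity statement amenable to the PSD factorization of $B$. Since $f''' \geq 0$ as a distribution, $f$ admits the representation
\[f(x) = p(x) + \frac{1}{2}\int_\R (x - s)_+^2\, d\mu(s)\]
for some polynomial $p$ of degree at most $2$ and non-negative measure $\mu = f'''$. The polynomial part contributes zero to the third derivative of $\phi(t) := \tr f(A + tB)$, so by linearity and Fubini it suffices to prove the theorem for $f(x) = (x - s)_+^2$; after translating $A \mapsto A - sI$ we may further take $f(x) = x_+^2$.

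Next I would compute $\phi'''(t)$ using the Daletskii--Krein (or equivalently, contour) formula. For $A$ with simple spectrum $\{\lambda_a\}$ (WLOG by density) and in its eigenbasis, one obtains
\[\phi'''(t) = \frac{6}{2\pi i}\oint_{\Gamma} f(z)\, \tr\bigl((B R(z))^3 R(z)\bigr)\, dz = 6 \sum_{a,b,c} f^{[3]}(\lambda_a, \lambda_a, \lambda_b, \lambda_c)\, B_{ab} B_{bc} B_{ca},\]
where $R(z) = (zI - A)^{-1}$ and $\Gamma$ encloses $\spec A$. The divided differences are non-negative by the Peano/B-spline representation (since $f''' \geq 0$), but the products $B_{ab}B_{bc}B_{ca}$ can be complex, so positivity of the sum is not automatic and the hypothesis $B \geq 0$ must be used. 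Writing $B = C^*C$, the integrand rearranges as $f(z)\,\tr(E(z)^2 F(z))$ with $E(z) = C R(z) C^*$ and $F(z) = C R(z)^2 C^* = (CR(z))(CR(z))^*$. On the real axis (outside $\spec A$), $R(z)$ is Hermitian, hence $E(z)$ is Hermitian and $F(z) \geq 0$, so
\[\tr(E^2 F) = \tr\bigl((E F^{1/2})(E F^{1/2})^*\bigr) \geq 0\]
pointwise.

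The main obstacle is that $f(x) = x_+^2$ is not analytic on $\R$, so the contour representation cannot be invoked directly. I would handle this by approximating $f$ with analytic functions $f_\varepsilon$ still satisfying $f_\varepsilon''' \geq 0$ (for instance $f_\varepsilon(x) = \tfrac{1}{4}(x + \sqrt{x^2 + \varepsilon^2})^2$, analytic off a small segment of the imaginary axis), running the contour argument on a path enclosing $\spec A$ but avoiding the branch cut, and passing to the limit $\varepsilon \to 0$. In the limit the contour collapses onto the real axis, where the pointwise non-negativity of $\tr(E^2 F)$ combines (via $f_\varepsilon''' \to f'''$ weakly as non-negative measures) to deliver $\phi'''(t) \geq 0$. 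This mirrors the structure of the proof of Theorem \ref{der4}, where the analogous $4$-cycle trace $\tr((BR)^4)$ enjoys manifest positivity for Hermitian $B$ via $\tr(M^*M) \geq 0$, explaining why $B \geq 0$ is only needed in the odd-order case.
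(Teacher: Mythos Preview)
Your reduction to the building block $f(x)=x_{+}^{2}$ (equivalently $x|x|$, up to a quadratic) is correct and is exactly what the paper does. The observation that for real $s\notin\spec A$ and $B=C^{*}C$ one has $\tr\bigl((BR(s))^{3}R(s)\bigr)=\tr\bigl(E(s)^{2}F(s)\bigr)\geq 0$ is also correct and pleasant. The gap is the step you label ``the contour collapses onto the real axis.'' The contour $\Gamma$ in
\[
\phi_{\varepsilon}'''(0)=\frac{6}{2\pi i}\oint_{\Gamma} f_{\varepsilon}(z)\,\tr\bigl((BR(z))^{3}R(z)\bigr)\,dz
\]
encircles the eigenvalues of $A$, which sit \emph{on} the real axis and are genuine poles of the integrand; you cannot deform $\Gamma$ onto $\R$. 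If instead you push $\Gamma$ out to infinity (legitimate, since the integrand is $O(|z|^{-2})$), what you pick up is the branch cut of your $f_{\varepsilon}(z)=\tfrac14(z+\sqrt{z^{2}+\varepsilon^{2}})^{2}$, and that cut lies on the \emph{imaginary} axis between $\pm i\varepsilon$, where $R(z)$ is not Hermitian and $\tr(E^{2}F)$ has no positivity. So the mechanism you describe---pointwise positivity of $\tr(E^{2}F)$ on $\R$ feeding into the contour integral---never actually engages. (The same issue would afflict your parenthetical description of the $k=4$ case: the relevant integrand is $\tr((BR)^{4}R)$, which is \emph{not} of the form $\tr(M^{*}M)$ on the real line; the paper's proof of Theorem~\ref{posres} does not proceed via any contour/resolvent positivity.)

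The paper's route is more pedestrian but complete. After the same reduction, it uses Lemma~\ref{derivative_identity} to write the smooth part of $\phi'''$ as
\[
\sum_{i_{1},i_{2},i_{3}}[\lambda_{i_{1}},\lambda_{i_{2}},\lambda_{i_{3}}]_{|\cdot|}\,B_{i_{1}i_{2}}B_{i_{2}i_{3}}B_{i_{3}i_{1}},
\]
computes the divided differences of $|\cdot|$ explicitly according to the sign pattern of $(\lambda_{i_{1}},\lambda_{i_{2}},\lambda_{i_{3}})$, and then regroups the two nonvanishing patterns $S_{-,+,+}$ and $S_{+,-,-}$ as sums of the form $\sum_{i}(\pm\lambda_{i})\langle Bv_{i},v_{i}\rangle$ for explicit vectors $v_{i}$, with the sign of $\pm\lambda_{i}$ always positive. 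This is where $B\geq 0$ enters, via $\langle Bv,v\rangle\geq 0$---morally the same use of positive semidefiniteness as your $B=C^{*}C$ trick, but applied to a finite algebraic identity rather than to a contour representation. If you want to salvage your approach, the honest target is to show directly that the kernel $K(s):=\sum_{a,b,c}[\lambda_{a},\lambda_{b},\lambda_{c}]_{(\cdot-s)_{+}}\,B_{ab}B_{bc}B_{ca}$ is nonnegative for every real $s$; but $K(s)$ is \emph{not} a constant multiple of $\tr(E(s)^{2}F(s))$ (their dependence on $s$ and on the $\lambda$'s is different), so a separate argument---essentially the paper's sign-pattern computation---is still needed.
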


\begin{konj}\label{der_conjecture}
    Let $k$ be positive integer and $f : (a, b) \to \R$ a function with non-negative $k$'th derivative (in distributional sense). Let $A, B$ be $n \times n$ matrices, and if $k$ is odd, additionally assume that $B$ is positive semidefinite. Then the function
    \begin{align*}
        t \mapsto \tr f(A + t B)
    \end{align*}
    has non-negative $k$'th derivative.
\end{konj}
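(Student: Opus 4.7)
My strategy would mirror the approach suggested by the $k=2,3,4$ cases: reduce to an extremal family of functions, then analyze the $k$-th derivative through spectral perturbation of eigenvalues.

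First, by the Peano kernel (integrated Taylor) representation, any $f$ with $f^{(k)}=\mu$ a non-negative measure can be written, on a bounded interval containing $\spec(A+tB)$ for $t$ in the range of interest, as
\[
    f(x) = P(x) + \frac{1}{(k-1)!}\int (x-s)_+^{k-1}\, d\mu(s),
\]
with $P$ a polynomial of degree $<k$. Since $\tr P(A+tB)$ is a polynomial in $t$ of degree $<k$, it contributes nothing to the $k$-th derivative. Hence the conjecture reduces, for every $s\in\R$, to showing that
\[
    \Phi_s(t) := \tr (A+tB-sI)_+^{k-1}
\]
has non-negative distributional $k$-th derivative in $t$.

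Next, by continuity of $\Phi_s$ in $(A,B)$ and the fact that non-negative distributions form a closed cone under weak limits, I would restrict to generic Hermitian pairs $(A,B)$: pairs for which $A+tB$ has simple spectrum for every real $t$ (open dense by the von Neumann--Wigner codimension-$3$ principle), and each analytic branch $\lambda_i(t)$ of the spectrum crosses $s$ only transversally. For such pairs $\Phi_s(t)=\sum_i (\lambda_i(t)-s)_+^{k-1}$ is $C^{k-2}$ and piecewise analytic, so $\Phi_s^{(k)}$ decomposes into a smooth ``bulk'' on open intervals between crossings plus a singular part consisting of Dirac masses at the crossings. A direct computation at a transversal crossing $\lambda_i(t_0)=s$ gives a delta coefficient of $(k-1)!(\lambda_i'(t_0))^{k-1}$ when $k$ is even (always $\geq 0$), and $(k-1)!|\lambda_i'(t_0)|^{k-1}\sign(\lambda_i'(t_0))$ when $k$ is odd. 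By the Hellmann--Feynman formula, $\lambda_i'(t_0)=\langle v_i, Bv_i\rangle \geq 0$ whenever $B\geq 0$, so the singular part is non-negative exactly under the stated hypotheses.

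The hard part will be positivity of the bulk. On an interval where $S=\{i : \lambda_i(t)>s\}$ is constant, the bulk equals $\sum_{i\in S}\frac{d^k}{dt^k}(\lambda_i(t)-s)^{k-1}$. Expanded via Faà di Bruno together with the Rayleigh--Schrödinger formulas for higher derivatives of eigenvalues, this becomes a cyclic polynomial expression in the spectral gaps $\lambda_j-\lambda_l$, the shifted eigenvalues $\lambda_j-s$, and the matrix entries $B_{jl}$ in the instantaneous eigenbasis. For $k=2$ it collapses to $2\sum_{i\in S,\,j\notin S}|B_{ij}|^2/(\lambda_i-\lambda_j)$, which is non-negative because $\lambda_i>s>\lambda_j$. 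For $k=3$ and $k=4$ the arguments behind Theorems~\ref{der3} and~\ref{der4} presumably wrestle with analogous but much more intricate identities. The essential unsolved problem is to produce a positivity certificate---plausibly a non-commutative sum-of-Hermitian-squares decomposition, or an integral of a manifestly positive kernel---that works uniformly for all $k$ and that uses the hypothesis $B\geq 0$ precisely when $k$ is odd.
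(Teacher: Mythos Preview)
The statement you are attempting is labelled a \emph{conjecture} in the paper; the paper gives no proof of it and explicitly leaves it open. What the paper does prove are the special cases $k=3$ (Theorem~\ref{der3}) and $k=4$ (Theorem~\ref{der4}), and your outline is precisely the scaffolding used there: reduce via an integral/approximation argument to the extremal functions (the paper invokes Bullen's criterion and works with $|t-c|^{k-1}$ rather than $(t-c)_+^{k-1}$, an inessential difference), split off the singular contributions at eigenvalue crossings using analyticity of the eigenvalue branches, and then attack the smooth bulk through the divided-difference identity of Lemma~\ref{derivative_identity}, which expresses $\frac{1}{k!}\frac{d^k}{dt^k}\tr f(A+tB)$ as a sum over cyclic products $B_{i_1 i_2}\cdots B_{i_k i_1}$. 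For $k=3,4$ the paper then groups these terms by the sign pattern of $(\lambda_{i_1},\ldots,\lambda_{i_k})$ and exhibits explicit Hermitian-square certificates (formulas (\ref{form1})--(\ref{form2}) for $k=4$; the inner products $\langle Bv_i,v_i\rangle$, $\langle Bu_i,u_i\rangle$ for $k=3$).

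You correctly identify the bulk positivity as the entire difficulty and do not close it, so this is an honest description of where the problem stands rather than a proof. One warning from the paper's own discussion (Section~\ref{discuss}): for the closely related problem of showing $(\tr|A+tB|^p)^{(p+1)}\geq 0$ with $p$ odd, the natural decomposition of the bulk into non-negative pieces indexed by the number of sign changes in the cyclic pattern---exactly the mechanism that succeeds for $k=4$---provably does \emph{not} extend beyond the smallest case for general $n$. So the ``sum-of-Hermitian-squares'' certificate you propose as a plausible endgame cannot be a direct extrapolation of the $k=3,4$ structure; any proof will require a genuinely new organizing idea.
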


In section \ref{discuss} we show that if true, conjecture \ref{der_conjecture} would give a new proof (and be a vast generalization of) Stahl's Theorem \cite{stahl2013proof} (previously known as BMV conjecture \cite{bessis1975monotonic}).

\begin{lause}[Stahl]\label{stahl}
    Let $A, B$ be Hermitian $n \times n$ matrices with $B$ positive semidefinite. Then
    \begin{align*}
        t \mapsto \tr(\exp(A - t B))
    \end{align*}
    is a Laplace transform of a positive measure on $[0, \infty)$.
\end{lause}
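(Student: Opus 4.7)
The plan is to deduce Stahl's theorem from Conjecture \ref{der_conjecture} via Bernstein's characterization of Laplace transforms of positive measures. Recall Bernstein's theorem: a smooth function $F : [0, \infty) \to \R$ is the Laplace transform of some positive measure on $[0, \infty)$ if and only if it is \emph{completely monotone}, i.e. $(-1)^k F^{(k)}(t) \geq 0$ for every $k \geq 0$ and every $t \geq 0$. It therefore suffices to show that $F(t) := \tr \exp(A - tB)$ is completely monotone on $[0, \infty)$ (in fact the argument below will give complete monotonicity on all of $\R$).

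Substituting $A' = -A$ (still Hermitian) and $g(x) = e^{-x}$, I would write $F(t) = \tr g(A' + tB)$ and apply Conjecture \ref{der_conjecture} once for each $k \geq 1$. When $k$ is even, $g^{(k)}(x) = e^{-x} \geq 0$, so the conjecture — which in this case requires no sign hypothesis on $B$ — gives $F^{(k)}(t) \geq 0 = (-1)^k F^{(k)}(t)$. When $k$ is odd, $g^{(k)}(x) = -e^{-x} \leq 0$, so I apply the conjecture instead to $-g$, whose $k$th derivative equals $e^{-x} \geq 0$; invoking the hypothesis $B \geq 0$ from the statement of Stahl's theorem one obtains $-F^{(k)}(t) \geq 0$, hence again $(-1)^k F^{(k)}(t) \geq 0$. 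The base case $F(t) \geq 0$ is immediate since $A - tB$ is Hermitian and so $\exp(A - tB)$ is positive semidefinite. Bernstein's theorem then supplies the desired positive measure on $[0, \infty)$.

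All the real difficulty is thus pushed into Conjecture \ref{der_conjecture}: the deduction above is essentially a one-line invocation of Bernstein's theorem once the conjecture is granted, with the even/odd split in complete monotonicity tracking exactly the even/odd split of the conjecture's hypothesis on $B$. The main obstacle is of course proving the conjecture in full generality — the cases $k = 3, 4$ in Theorems \ref{der3} and \ref{der4} presumably furnish the template, but extending the method to arbitrary order $k$, in a form strong enough to subsume Stahl's theorem, is the genuinely hard step and is precisely the open problem the paper flags.
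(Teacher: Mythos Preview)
Your argument is correct and matches the paper's own conditional proof: reduce Stahl's theorem to complete monotonicity via Bernstein's theorem, then invoke Conjecture \ref{der_conjecture} for each $k$. The paper phrases it marginally more compactly by applying the conjecture directly to $f = \exp$ and $t \mapsto \tr\exp(A + tB)$ (so that all derivatives are non-negative at once, avoiding the even/odd split), but this is exactly the same idea as your treatment via $g(x) = e^{-x}$.
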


Section \ref{mainsec} contains the proofs of the main results, together with the necessary preliminaries. Section \ref{discuss} contains discussion and partial results towards the aforementioned conjectures.

\section{Proofs of the main results}\label{mainsec}

\subsection{Preliminaries}\label{prel}

We make the usual conventions $\ell_{p} := L_{p}(\N, \R)$ and $L_{p} := L_{p}((0, 1), \R)$, with counting and Lebesgue measures, respectively. We say that a norm $\|\cdot\|$ on $\R^{n}$ is $L_{p}$-norm, if $(\R^{n}, \|\cdot\|)$ is isometric to a subspace of $L_{p}$. This is equivalent to the existence of a symmetric measure $\mu$ on $S^{n - 1}$ (given any inner product $\langle \cdot, \cdot \rangle$) such that
\begin{align}\label{lp_repr}
    \|v\|^{p} = \int_{S^{n - 1}} |\langle v, u \rangle|^{p} d \mu(u)
\end{align}
for any $v \in \R^{n}$ (see \cite{neyman1984representation}). If $p$ is not an even integer, such measure, whenever it exists, is unique \cite[Theorem 2]{koldobskii1991convolution}. The issue of recovering the measure from the norm is discussed by Koldobsky in \cite{koldobsky1992generalized}; we shall need a simple corollary of the work therein.

\begin{lem}\label{lemKol}
	Let $\|\cdot\|$ be a norm in $\R^{2}$ such that for any $v, w \in \R^{2}$ the $4$th distributional derivative of
	\begin{align*}
		t \mapsto \|t v + w\|^{3}
	\end{align*}
	is a finite positive Borel measure $\mu$, for which $\mu(|x|^3) < \infty$. Then $\|\cdot\|$ is linearly isometrically embeddable into $L_{3}$.
\end{lem}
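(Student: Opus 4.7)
The plan is to produce a symmetric positive measure $\sigma$ on $S^{1}$ realizing $\|v\|^{3} = \int|\langle v,u\rangle|^{3}\,d\sigma(u)$. The key identity driving everything is
\begin{equation*}
    \partial_{t}^{4}|at+b|^{3} = 12|a|^{3}\,\delta_{-b/a}(t)\qquad(a\ne 0),
\end{equation*}
which says that inside any such representation, taking four $t$-derivatives of $\|tv+w\|^{3}$ gives the pushforward of $12|\langle v,u\rangle|^{3}\,d\sigma$ under the slope map $u\mapsto -\langle w,u\rangle/\langle v,u\rangle$. The hypothesis supplies exactly the positivity and integrability needed to invert this pushforward.

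I would first invoke the hypothesis for the pair $(v,w) = (e_{1}, e_{2})$ to obtain a positive Borel measure $\mu_{0} := \partial_{t}^{4}\|te_{1}+e_{2}\|^{3}$ on $\R$ with $\mu_{0}(|x|^{3}) < \infty$. Parametrizing the open half-circle by $\theta \in (-\pi/2, \pi/2)$ via $u = (\cos\theta, \sin\theta)$, the slope map becomes the diffeomorphism $\theta \mapsto -\tan\theta$; pulling $\mu_0$ back and dividing by $12|\cos\theta|^{3}$ gives a finite positive Borel measure on this half-circle,
\begin{equation*}
    \int h(\theta)\,d\sigma_{0}(\theta) := \tfrac{1}{12}\int h(-\arctan x)(1+x^{2})^{3/2}\,d\mu_{0}(x),
\end{equation*}
with finiteness coming directly from the moment hypothesis. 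I would then extend $\sigma_{0}$ by an atom of mass $c := \|e_{2}\|^{3} - \tfrac{1}{12}\mu_{0}(|x|^{3})$ at $e_{2}$ (the single direction invisible to $(e_{1}, e_{2})$ in the half-circle picture), producing the candidate measure $\sigma$.

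To verify $\|v\|^{3} = \int|\langle v,u\rangle|^{3}\,d\sigma(u)$ for every $v$, set $D(v) := \|v\|^{3} - \int|\langle v,u\rangle|^{3}\,d\sigma(u)$; it is continuous, even, and positively $3$-homogeneous, with $\partial_{t}^{4}D(te_{1}+e_{2}) \equiv 0$ by construction. Combining this vanishing with $3$-homogeneity and evenness forces the global shape
\begin{equation*}
    D(te_{1}+se_{2}) = a_{3}\sign(s)\,t^{3} + a_{2}|s|\,t^{2} + a_{1}\,s|s|\,t + a_{0}|s|^{3}\qquad(s\ne 0).
\end{equation*}
Continuity at $s = 0$ kills $a_{3}$, the choice of $c$ gives $a_{0} = D(e_{2}) = 0$, and the hypothesis applied to $(v,w) = (e_{2},e_{1})$ makes $\partial_{s}^{4}D(e_{1}+se_{2}) = 2a_{2}\delta''(s) + 4a_{1}\delta'(s)$ a signed Borel measure, which forces $a_{1} = a_{2} = 0$ since $\delta', \delta''$ are not signed measures. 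Hence $D \equiv 0$.

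The main obstacle is proving $c \ge 0$, so that $\sigma$ really is a positive measure. I expect this to come from the identity $\mu_{0}(|x|^{3}) + \nu_{0}(\{0\}) = 12\|e_{2}\|^{3}$, where $\nu_{0} := \partial_{s}^{4}\|e_{1}+se_{2}\|^{3}$ is the second hypothesis measure; the identity follows from a careful distributional integration by parts with smooth cutoffs at $\pm R$, using that the universal combinatorial coefficients of Leibniz's rule make the contributions of the cubic asymptotics of $\|xe_{1}+e_{2}\|^{3}$ cancel. Given the identity, $c = \nu_{0}(\{0\})/12 \ge 0$ follows from $\nu_{0} \ge 0$. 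Alternatively, the entire construction can be packaged via Koldobsky's inversion formula from \cite{koldobsky1992generalized}, which defines $\sigma$ as a distribution directly from $\|\cdot\|^{3}$ and identifies the hypothesis with its positivity.
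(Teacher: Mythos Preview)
Your approach is correct and genuinely different from the paper's. The paper does not construct the measure by hand; it invokes Theorem~5 and Remark~1 of \cite{koldobsky1992generalized} as a black box to obtain a decomposition $\|xv+yw\|^{3} = \|xv+yw\|_{3,v,w}^{3} + h_{v}(x) + h_{w}(y)$ with $\|\cdot\|_{3,v,w}$ an $L_{3}$-norm, $h_{v}$ polynomial, and $h_{v}(0)=h_{w}(0)=0$. Plugging in $y=0$ forces $h_{v}\equiv 0$ (the only polynomial multiple of $|x|^{3}$ is zero), and then $h_{w}(y)=\alpha|y|^{3}$; so one gets a representation with a positive measure plus a single atom of unknown sign. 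Your explicit pullback of $\mu_{0}$ through the slope map, followed by the polynomial analysis of $D$, is exactly an elementary reconstruction of this Koldobsky output in the two-dimensional, $p=3$ case---more laborious but fully self-contained.

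The real point of comparison is how each argument shows the residual atom is non-negative. Your route is computational: the identity $\mu_{0}(|x|^{3}) + \nu_{0}(\{0\}) = 12\|e_{2}\|^{3}$ does hold, and your sketch via cutoffs is viable (the clean way to see it is the exact relation $g(s)=|s|^{3}f(1/s)$, which yields $f^{(4)}(x)=x^{-5}g^{(4)}(1/x)$ pointwise for $x\neq 0$ and hence, as measures, $|x|^{3}\,d\mu_{0}|_{\R\setminus\{0\}}$ equals the pushforward of $d\nu_{0}|_{\R\setminus\{0\}}$ under $s\mapsto 1/s$; combined with $\nu_{0}(\R)=g'''(+\infty)-g'''(-\infty)=12\|e_{2}\|^{3}$ this gives the identity). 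The paper's route is softer and rather elegant: the potentially negative atom sits at a direction determined by the chosen pair $(v,w)$; choosing a different pair moves it elsewhere, and the uniqueness of the representing measure for $p=3$ (cf.\ \cite{koldobskii1991convolution}) forces all these signed measures to coincide, hence to be positive everywhere. Your method gives more quantitative information (an explicit formula for the atom), while the paper's avoids the integration-by-parts bookkeeping entirely.
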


See section \ref{discuss} for version of Lemma \ref{lemKol} for general $p \geq 1$.

\begin{proof}[Proof of Lemma \ref{lemKol}]
	It follows directly Theorem $5$ of \cite{koldobsky1992generalized} and Remark $1$ thereafter that for any $v, w \in \R^{2}$ there exists two functions $h_{v}, h_{w} : \R \to \R$, with $u_{v}$ polynomial, $h_{v}(0) = 0 = h_{w}(0)$; and $L_{p}$ norm $\|\cdot\|_{3, v, w}$ on $\R^{2}$ such that
	\begin{align*}
		\|x v + y w\|^{3} = \|x v + y w\|_{3, v, w}^{3} + h_{v}(x) + h_{w}(y).
	\end{align*}
	Considering $y = 0$, one sees that $h_{v}$ can only be polynomial if $h_{v} = 0$; similarly $u_{w}(y) = \alpha |y|^{3}$ for some $\alpha \in \R$. But this means that
	\begin{align*}
	    \|\cdot\|^{3} = \int_{S^{1}} |\langle \cdot, u \rangle|^{3} d \mu(u),
	\end{align*}
	where $\mu$ is positive measure, except possibly at one point (depending on $v, w$ and the pairing). Changing $v, w$ and applying the uniqueness result \cite[Theorem 2]{koldobskii1991convolution}, we see that $\mu$ is positive everywhere.
\end{proof}

We shall need only very basic properties Schatten classes. Together with the norm (\ref{schattennorm}) they form Banach spaces. For $1 \leq p < \infty$ finite rank operators are dense in $\S_{p}$ \cite{mc1967c_rho}. While $\ell_{p}$ is isometric a subspace of $\S_{p}$, $\S_{p}$ is not isomorphic to any subspace of $L_{p}$ \cite{pisier1978some}.

We shall need basic properties of divided differences. Define
\begin{align*}
	[x_{0}]_{f} = f(x_{0}),
\end{align*}
and recursively for any positive integer $k$
\begin{align}\label{lol}
	[x_{0}, x_{1}, \ldots, x_{k}]_{f} = \frac{[x_{0}, x_{1}, \ldots, x_{k - 1}]_{f} - [x_{1}, x_{2}, \ldots, x_{k}]_{f}}{x_{0} - x_{k}},
\end{align}
when points $x_{0}, x_{1}, \ldots, x_{k}$ are pairwise distinct. If $f$ is $C^{k}$, the divided differences of order $k$ has continuous extension to all tuples of $(k + 1)$ points. This extension satisfies (\ref{lol}) whenever $x_{0} \neq x_{k}$ and
\begin{align*}
	[x_{0}, x_{0}, \ldots, x_{0}]_{f} = \frac{f^{(k)}(x_{0})}{k!},
\end{align*}
where $x_{0}$ appears $(k + 1)$ times. For this and lot more, see for instance \cite{de2005divided}.

\subsection{Proofs of Theorems \ref{mainemb} and \ref{negthm}}

Our goal is to check the conditions of Lemma \ref{lemKol}.

\begin{lem}\label{lem_decay}
	Let $A$ and $B$ be $n \times n$ Hermitian matrices such that $B$ is invertible. Then
	\begin{align*}
		\left(\tr |A + t B|^{3}\right)^{(4)} = O(1/|t|^{5})
	\end{align*}
	when $|t| \to \infty$.
\end{lem}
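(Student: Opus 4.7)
The plan is to show that for $|t|$ sufficiently large, the function $f(t) := \tr |A+tB|^3$ agrees with a convergent Laurent series in $1/t$ whose non-negative-power part is a polynomial of degree at most $3$. Four differentiations then kill the polynomial part and send each remaining term $t^{-k}$ (with $k \geq 1$) to a bounded constant times $t^{-k-4}$, so the total is $O(1/|t|^5)$.

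The starting observation is that for any Hermitian matrix $X$ with no zero eigenvalue, the spectral calculus yields $|X|^3 = \operatorname{sgn}(X)\,X^3$. Factoring $A + tB = t(B + A/t)$ and using $\operatorname{sgn}(cX) = \operatorname{sgn}(c)\operatorname{sgn}(X)$ for a nonzero real scalar $c$, we obtain
\begin{align*}
    \tr|A + tB|^3 \;=\; \operatorname{sgn}(t)\,\tr\bigl[\operatorname{sgn}(B + A/t)\,(A+tB)^3\bigr].
\end{align*}
Because $B$ is invertible its spectrum splits into a positive and a negative cluster, each bounded away from $0$. For $|t| > T_0$ the perturbation $B + A/t$ has its spectrum in the same two clusters, so we can use Riesz projections: fixing small contours $\gamma_\pm$ enclosing the positive and negative spectra of $B$, set
\begin{align*}
    \operatorname{sgn}(B + sA) \;=\; \frac{1}{2\pi i}\oint_{\gamma_+}(z - B - sA)^{-1}\,dz \;-\; \frac{1}{2\pi i}\oint_{\gamma_-}(z - B - sA)^{-1}\,dz.
\end{align*}
By holomorphic functional calculus this is a matrix-valued holomorphic function of $s$ in a neighborhood of $s = 0$, with a convergent power series $\operatorname{sgn}(B + sA) = \sum_{k\geq 0} M_k\,s^k$ for $|s|$ small.

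Substituting $s = 1/t$ and expanding $(A+tB)^3 = \sum_{i=0}^{3} t^i N_i$ with matrix coefficients $N_i$ independent of $t$, we get, for $|t| > T_0$,
\begin{align*}
    f(t) \;=\; \operatorname{sgn}(t)\sum_{k \geq 0}\sum_{i=0}^{3} t^{\,i-k}\,\tr\!\bigl[M_k N_i\bigr] \;=\; \operatorname{sgn}(t)\sum_{j \leq 3} b_j^{\pm}\,t^{\,j},
\end{align*}
with the superscript $\pm$ indicating the sign of $t$ (the two sign-branches may have different coefficients, but the analysis is identical). The series converges absolutely on $|t| > T_0$, so term-by-term differentiation is valid: the four finitely many terms with $j \in \{0, 1, 2, 3\}$ vanish after four derivatives, while each $b_{-k}^{\pm}\,t^{-k}$ becomes $k(k+1)(k+2)(k+3)\,b_{-k}^{\pm}\,t^{-k-4}$. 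The resulting tail is dominated by a constant multiple of $1/|t|^5$, proving the decay bound.

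The only nontrivial input is the holomorphic dependence of $\operatorname{sgn}(B + sA)$ on $s$, which is however immediate from the Riesz projection formula once one knows that $0$ stays outside the spectrum for $|s|$ small — a consequence of the invertibility of $B$ that is exactly the hypothesis of the lemma. Everything else is formal Laurent-series manipulation.
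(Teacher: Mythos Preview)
Your argument is correct. Both your proof and the paper's proceed by showing that, for $|t|$ large, $\tr|A+tB|^{3}$ equals a polynomial of degree at most $3$ in $t$ plus a convergent Laurent tail in $1/t$; four derivatives then give the $O(|t|^{-5})$ bound. The difference lies in how the analytic structure is obtained. The paper invokes Rellich's theorem to write the eigenvalues of $B+\varepsilon A$ as analytic functions of $\varepsilon$ with fixed sign near $\varepsilon=0$, and then sums $|t|^{3}|\lambda_{i}(B+A/t)|^{3}$ eigenvalue by eigenvalue. You instead work at the operator level: the identity $|X|^{3}=\operatorname{sgn}(X)X^{3}$ together with the Riesz projection formula shows directly that $\operatorname{sgn}(B+sA)$ is holomorphic in $s$ near $0$, without tracking individual eigenvalues. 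Your route is slightly more self-contained, since the analyticity of the Riesz projection follows from the Neumann series for the resolvent and requires nothing as deep as Rellich's analytic perturbation theory; the paper's route is shorter to state and dovetails with its later use of eigenvalue analyticity in Theorem~\ref{posres}. One cosmetic remark: your coefficients $b_{j}^{\pm}$ in fact satisfy $b_{j}^{-}=-b_{j}^{+}$, since the only sign-dependent factor is the global $\operatorname{sgn}(t)$; this does not affect the argument.
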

\begin{proof}
	Recall that since $A$ and $B$ are Hermitian, by the result of Rellich \cite{rellich1969perturbation} the eigenvalues of $B + \varepsilon A$ are $n$ analytic functions with fixed sign for small enough $\varepsilon$. Consequently for large enough $t$ one has
	\begin{align*}
		\tr|A + t B|^{3} = \sum_{i = 1}^{n} |t|^{3} |\lambda_{i}(B)|^{3} (1 + O(1/t)),
	\end{align*}
	where the multipliers are analytic outside compact set. Differentiating this $4$ times kills the polynomial part and each summand is $O(1/|t|^{5})$ as required.
\end{proof}

\begin{lause}\label{posres}
	For any Hermitian $n \times n$ matrices $A$ and $B$, with $B$ invertible, the function
	\begin{align*}
		T := t \mapsto \tr |A + t B|^{3}
	\end{align*}
	is analytic outside finitely many points where $A + t B$ is singular. Outside these points the function has non-negative $4$th derivative. At the singular points the function behaves like
	\begin{align*}
		C |t - c|^{3} + D |t - c|^{3} (t - c) + f(t),
	\end{align*}
	where $C \geq 0$ and $f$ is $C^{4}$ near $c$. Consequently the $4$th derivative has non-negative multiple of $\delta$ measure at the singular points is therefore altogether non-negative measure.
\end{lause}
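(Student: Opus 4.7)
The plan is to combine Rellich's analyticity of the eigenvalues of $A+tB$ with the global non-negativity of the fourth derivative granted by Theorem \ref{der4}. Rellich will supply analyticity of $T(t) = \tr|A+tB|^3$ off the singular set $S := \{t \in \R : A + tB \text{ singular}\}$ together with precise local expansions at each $c \in S$, while Theorem \ref{der4} applied to $f(x) = |x|^3$ (whose fourth distributional derivative is the non-negative measure $12\delta_0$) ensures $T^{(4)} \geq 0$ as a distribution on $\R$.

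First I would establish finiteness of $S$ and analyticity elsewhere. Since $B$ is invertible, $\det(A + tB)$ is a degree-$n$ polynomial in $t$ with nonzero leading coefficient $\det(B)$, so it has at most $n$ real roots; hence $S$ is finite. By Rellich, the eigenvalues $\lambda_1(t),\ldots,\lambda_n(t)$ of $A + tB$ admit real analytic parametrizations, and none of them vanishes for $t \notin S$. On each connected component of $\R \setminus S$ the sign of $\lambda_i$ is therefore constant, so $|\lambda_i|^3 = \pm \lambda_i^3$ is real analytic, and likewise $T = \sum_i |\lambda_i|^3$. Applying Theorem \ref{der4} to $f(x) = |x|^3$ gives $T^{(4)} \geq 0$ distributionally on $\R$; restricted to the open set $\R \setminus S$ where $T$ is analytic this is pointwise non-negativity of the classical derivative.

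Next, I would analyze the local structure at a singular point $c \in S$. Those eigenvalues $\lambda_i$ with $\lambda_i(c) \neq 0$ contribute terms analytic near $c$. For each $i$ with $\lambda_i(c) = 0$, analyticity gives $\lambda_i(t) = (t-c)^{k_i} g_i(t)$ with $g_i$ analytic and $g_i(c) \neq 0$, where $k_i \geq 2$ is possible since $\langle Bv_i,v_i \rangle$ can vanish when $B$ is indefinite. Then $|\lambda_i(t)|^3 = |t-c|^{3k_i}\, |g_i(t)|^3$. If $k_i \geq 2$ then $|t-c|^{3k_i}$ is at least $|t-c|^6$, hence $C^4$, so the whole term is $C^4$. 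If $k_i = 1$, Taylor expand the smooth function $|g_i(t)|^3 = |g_i(c)|^3 + \gamma_i (t-c) + (t-c)^2 h_i(t)$ with $h_i$ smooth, yielding
\[
|\lambda_i(t)|^3 = |g_i(c)|^3 \, |t-c|^3 \;+\; \gamma_i \, |t-c|^3 (t-c) \;+\; |t-c|^5 h_i(t),
\]
in which the remainder $|t-c|^5 h_i(t)$ is $C^4$. Summing over $i$ and absorbing all analytic and $C^4$ terms into a single $C^4$ function $f$ yields the desired decomposition $T(t) = C|t-c|^3 + D |t-c|^3(t-c) + f(t)$ near $c$, with $C = \sum_{i:\,\lambda_i(c)=0,\, k_i=1} |g_i(c)|^3 \geq 0$.

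To conclude, one computes $(|t-c|^3)^{(4)} = 12\delta_c$ and $(|t-c|^3 (t-c))^{(4)} = 24\,\sign(t-c)$, the latter being bounded with no singular part, while $f^{(4)}$ is continuous. So near each $c \in S$ the distribution $T^{(4)}$ is $12C\,\delta_c$ plus an $L^\infty_{\mathrm{loc}}$ function; combined with the pointwise non-negativity on $\R \setminus S$ proved above, $T^{(4)}$ is a non-negative Borel measure on $\R$. The main obstacle is the non-negativity of $T^{(4)}$ off $S$: in the expression $T = \sum_i \varepsilon_i \lambda_i^3$ with locally constant signs $\varepsilon_i$, the fourth derivative is not manifestly non-negative, and Theorem \ref{der4} is what bridges this gap. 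Given Rellich, the singular-point expansion is then routine apart from careful bookkeeping of the orders of vanishing $k_i$.
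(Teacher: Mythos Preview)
Your local analysis at the singular points is essentially identical to the paper's and is fine. The fatal problem is circularity: you invoke Theorem~\ref{der4} to obtain $T^{(4)}\geq 0$, but in this paper Theorem~\ref{der4} is \emph{deduced from} Theorem~\ref{posres}, not the other way around. Indeed, the proof of Theorem~\ref{der4} approximates a general $f$ with $f^{(4)}\geq 0$ by sums $p(t)+\sum_i w_i|t-c_i|^3$ and then appeals to Theorem~\ref{posres} for each building block $|t-c_i|^3$. So your argument as written assumes exactly what you are trying to prove. You even flag this yourself: ``the fourth derivative is not manifestly non-negative, and Theorem~\ref{der4} is what bridges this gap'' --- but that bridge is supported by Theorem~\ref{posres}.

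What the paper actually does for the hard part (non-negativity of $T^{(4)}$ at smooth points) is a direct computation. Via the derivative identity (Lemma~\ref{derivative_identity}) one writes $T^{(4)}(0)/4!$ as a sum over quadruples $(i_1,i_2,i_3,i_4)$ of divided differences $[\lambda_{i_1},\lambda_{i_2},\lambda_{i_3},\lambda_{i_4}]_{f'}$ times $B_{i_1 i_2}B_{i_2 i_3}B_{i_3 i_4}B_{i_4 i_1}$. Using explicit formulas for the divided differences of $x|x|$ and grouping the terms by the sign pattern of $(\lambda_{i_1},\ldots,\lambda_{i_4})$, the paper rewrites the sum as a combination of expressions of the form $\sum \frac{1}{\lambda_i-\lambda_j}\bigl|\cdots\bigr|^2$ with positive weights (formulas~(\ref{form1}) and~(\ref{form2})), which are manifestly non-negative. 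This algebraic identity is the genuine content of Theorem~\ref{posres}; once it is in hand, Theorem~\ref{der4} follows by approximation, not vice versa.
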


\begin{proof}
	The second claim follows straightforwardly from the analyticity of the eigenvalues. Indeed, assume that say $0$ is a singular point. Now any eigenvalue can be written analytically as $\lambda_{i}(t)$. If $\lambda_{i}(0) \neq 0$, $|\lambda_{i}(t)|^{3}$ is smooth near $0$. If on the other hand $\lambda_{i}(0) = 0$, we can write $\lambda_{i}(t) = t^{k} \nu_{i}(t)$ for some analytic $\nu_{i}$ with $\nu_{i}(0) \neq 0$ and $k$ positive integer (recall that $B$ is invertible). Now
	\begin{align*}
		|\lambda_{i}(t)|^{3} = |t|^{3 k} |\nu_{i}(t)|^{3}.
	\end{align*}
	Note however that the second term is analytic near $0$. If $k > 1$ the whole function is $C^{4}$ near $0$. If $k = 1$, we may expand $|\nu_{i}|^{3}$ at $0$ to get required expansion for a single eigenvalue. Repeating this for all the eigenvalues yields the claim.

	Let us then move to the heart of the matter. First note that since $\det(A t + B)$ is non-zero polynomial, it can only have finitely many zeroes, and there can consequently be only finitely many points where $T$ is not analytic.

	Next we need an identity for the smooth part.

	\begin{lem}\label{derivative_identity}
		(cf. \cite[Theorem 2.3.1.]{hiai2010matrix}) Let $A, B$ be Hermitian $n \times n$ matrices and $f$ be analytic near the eigenvalues of $A$. Write $F(t) = \tr(f(A + t B))$. Then
		\begin{align}\label{main_expr}
			\frac{F^{(k)}(0)}{k!} = \frac{1}{k}\sum_{i_{1} = 1}^{n} \sum_{i_{2} = 1}^{n} \cdots \sum_{i_{k} = 1}^{n} [\lambda_{i_{1}}, \lambda_{i_{2}}, \ldots, \lambda_{i_{k}}]_{f'} B_{i_{1}, i_{2}} B_{i_{2}, i_{3}} \cdots B_{i_{k}, i_{1}}.
		\end{align}
		Here $B_{i, j}$ is the matrix of $B$ in the eigenbasis of $A$, and $\lambda_{1}, \lambda_{2}, \ldots, \lambda_{n}$ are the eigenvalues of $A$.
	\end{lem}
	\begin{proof}
		The LHS clearly only depends on the value of $f$ and its first $k$ derivatives at the eigenvalues of $A$. Hence it is sufficient to prove such identity for a special class of functions, say for polynomials. For function $(\cdot)^{m}$ the LHS is simply
		\begin{align*}
			(\tr(A + t B)^{m})^{(k)}(0) =& \sum_{j_{0}, j_{1}, \ldots,  j_{k} \geq 0, \sum j_{i} = m - k} \tr(A^{j_{0}} B A^{j_{1}} B \cdots B A^{j_{k}}) \\
			=& \sum_{j_{1}, \ldots,  j_{k} \geq 0, \sum j_{i} = m - k} (j_{k} + 1) \tr(B A^{j_{1}} B \cdots B A^{j_{k}}) \\
			=& \frac{m}{k} \sum_{j_{1}, \ldots,  j_{k} \geq 0, \sum j_{i} = m - k} \tr(A^{j_{1}} B \cdots B A^{j_{k}} B) \\
			=& \frac{m}{k} \sum_{j_{1}, \ldots,  j_{k} \geq 0, \sum j_{i} = m - k} \sum_{i_{1} = 1}^{n} \sum_{i_{2} = 1}^{n} \cdots \sum_{i_{k} = 1}^{n} \lambda_{i_{1}}^{j_{1}} \lambda_{i_{2}}^{j_{2}} \cdots \lambda_{i_{k}}^{j_{k}} B_{i_{1} i_{2}} B_{i_{2} i_{3}} \cdots B_{i_{k} i_{1}}.
		\end{align*}
		But since
		\begin{align*}
			\sum_{j_{1}, \ldots,  j_{k} \geq 0, \sum j_{i} = m - k} \lambda_{i_{1}}^{j_{1}} \lambda_{i_{2}}^{j_{2}} \cdots \lambda_{i_{k}}^{j_{k}} = [\lambda_{i_{1}}, \lambda_{i_{2}}, \ldots, \lambda_{i_{k}}]_{x^{m - 1}},
		\end{align*}
		we are done.
	\end{proof}

	Let us now focus on the main expression (\ref{main_expr}) with $k = 4$ and $f = |\cdot|^{3}$. Write $g = f'/3 = (\cdot) |\cdot|$ We have the following simple identities.
	\begin{lem}
		Let $a_{1}, a_{2}, a_{3}, a_{4} < 0 < b_{1}, b_{2}, b_{3}, b_{4}$. Then the following identities hold:
		\begin{align}
			[a_{1}, a_{2}, a_{3}, a_{4}]_{g} =& 0 \label{easy_terms1}\\
			[a_{1}, a_{2}, a_{3}, b_{1}]_{g} =& \frac{2 b_{1}^2}{(b_{1} - a_{1}) (b_{1} - a_{2}) (b_{1} - a_{3})} \label{mid1}\\
			[a_{1}, a_{2}, b_{1}, b_{2}]_{g} =& \frac{2 (b_{1} + b_{2}) a_{1} a_{2} - 2 (a_{1} + a_{2}) b_{1} b_{2}}{(b_{1} - a_{1}) (b_{1} - a_{2}) (b_{2} - a_{1}) (b_{2} - a_{2})} \label{midmid}\\
			=& \frac{-2 a_{1} b_{1}}{(b_{1} - a_{1}) (b_{1} - a_{2}) (b_{2} - a_{1})} + \frac{-2 a_{2} b_{2}}{(b_{2} - a_{2}) (b_{2} - a_{1}) (b_{1} - a_{2})} \nonumber \\
			[a_{1}, b_{1}, b_{2}, b_{3}]_{g} =& \frac{2 a_{1}^2}{(b_{1} - a_{1}) (b_{2} - a_{1}) (b_{3} - a_{1})} \label{mid2} \\
			[b_{1}, b_{2}, b_{3}, b_{4}]_{g} =& 0. \label{easy_terms2}
		\end{align}
		\begin{proof}
			Straightforward to check from the definitions.
		\end{proof}
	\end{lem}

	Let us then complete the proof of Theorem \ref{posres}. Assume that the matrix $A$ has $n_{1}$ negative ($\lambda_{1}, \lambda_{2}, \ldots, \lambda_{n_{1}}$) and $n_{2}$ positive ($\lambda_{n_{1} + 1}, \ldots, \lambda_{n}$) eigenvalues ($n_{1} + n_{2} = n$). To prove (\ref{main_expr}) is non-negative, we first group it in several parts. Sum consists of $n^{4}$ summands; separate them in patterns depending on whether $i_{1}$, $i_{2}$, $i_{3}$ and $i_{4}$ are at most or greater than $n_{1}$. If say $i_{1}, i_{3} \leq n_{1} < i_{2}, i_{4}$, assign this term pattern $(-, +, -, +)$. Write then $S_{-, +, -, +}$ for the sum of all the terms with pattern $(-, +, -, +)$. Note that since $[x_{0}, x_{1}, \ldots, x_{k}]_{f} = [x_{\sigma(0)}, x_{\sigma(1)}, \ldots, x_{\sigma(k)}]_{f}$ for any permutation $\sigma$, many of these sums are equal, and they can be in fact split into following groups:
	\begin{enumerate}[(a)]
		\item $S_{-, -, -, -}$
		\item $S_{+, -, -, -}$, $S_{-, +, -, -}$, $S_{-, -, +, -}$, $S_{-, -, -, +}$
		\item $S_{+, +, -, -}$, $S_{-, +, +, -}$, $S_{-, -, +, +}$, $S_{+, -, -, +}$
		\item $S_{+, -, +, -}$, $S_{-, +, -, +}$
		\item $S_{-, +, +, +}$, $S_{+, -, +, +}$, $S_{+, +, -, +}$, $S_{+, +, +, -}$
		\item $S_{+, +, +, +}$.
	\end{enumerate}
	Note also that by (\ref{easy_terms1}) and (\ref{easy_terms2}), $S_{-, -, -, -} = 0 = S_{+, +, +, +}$. Moreover, by (\ref{mid1}), (\ref{midmid}) and (\ref{mid2}), we have
	\begin{align*}
		S_{+, -, -, -} =& \sum_{i_{1} = n_{1} + 1}^{n} \sum_{i_{2}, i_{3}, i_{4} = 1}^{n_{1}} \frac{2 \lambda_{i_{1}}^2 B_{i_{1} i_{2}} B_{i_{2} i_{3}} B_{i_{3} i_{4}} B_{i_{4} i_{1}}}{( \lambda_{i_{1}} -  \lambda_{i_{2}}) ( \lambda_{i_{1}} -  \lambda_{i_{3}}) (\lambda_{i_{1}} -  \lambda_{i_{4}})} \\
		S_{+, +, -, -} =&\sum_{i_{1}, i_{2} = n_{1} + 1}^{n} \sum_{i_{3}, i_{4} = 1}^{n_{1}} \frac{-2 \lambda_{i_{1}} \lambda_{i_{3}} B_{i_{1} i_{2}} B_{i_{2} i_{3}} B_{i_{3} i_{4}} B_{i_{4} i_{1}}}{(\lambda_{i_{1}} - \lambda_{i_{3}}) (\lambda_{i_{2}} -\lambda_{i_{3}}) (\lambda_{i_{1}} - \lambda_{i_{4}})} \\
		+& \sum_{i_{1}, i_{2} = n_{1} + 1}^{n} \sum_{i_{3}, i_{4} = 1}^{n_{1}} \frac{-2 \lambda_{i_{2}} \lambda_{i_{4}} B_{i_{1} i_{2}} B_{i_{2} i_{3}} B_{i_{3} i_{4}} B_{i_{4} i_{1}}}{(\lambda_{i_{2}} - \lambda_{i_{4}}) (\lambda_{i_{1}} -\lambda_{i_{4}}) (\lambda_{i_{2}} - \lambda_{i_{3}})}
	\end{align*}
	\begin{align*}
		S_{+, -, +, -} =& \sum_{i_{1}, i_{3} = n_{1} + 1}^{n} \sum_{i_{2}, i_{4} = 1}^{n_{1}} \frac{-2 \lambda_{i_{1}} \lambda_{i_{3}} (\lambda_{i_{2}} + \lambda_{4}) B_{i_{1} i_{2}} B_{i_{2} i_{3}} B_{i_{3} i_{4}} B_{i_{4} i_{1}}}{(\lambda_{i_{1}} - \lambda_{i_{2}}) (\lambda_{i_{3}} -\lambda_{i_{2}}) (\lambda_{i_{1}} - \lambda_{i_{4}}) (\lambda_{i_{3}} - \lambda_{i_{4}})} \\
		+& \sum_{i_{1}, i_{3} = n_{1} + 1}^{n} \sum_{i_{2}, i_{4} = 1}^{n_{1}} \frac{2 \lambda_{i_{2}} \lambda_{i_{4}} (\lambda_{i_{1}} + \lambda_{3}) B_{i_{1} i_{2}} B_{i_{2} i_{3}} B_{i_{3} i_{4}} B_{i_{4} i_{1}}}{(\lambda_{i_{1}} - \lambda_{i_{2}}) (\lambda_{i_{3}} -\lambda_{i_{2}}) (\lambda_{i_{1}} - \lambda_{i_{4}}) (\lambda_{i_{3}} - \lambda_{i_{4}})} \\
		S_{-, +, +, +} =& \sum_{i_{1} = 1}^{n_{1}} \sum_{i_{2}, i_{3}, i_{4} = n_{1} + 1}^{n} \frac{2 \lambda_{i_{1}}^2 B_{i_{1} i_{2}} B_{i_{2} i_{3}} B_{i_{3} i_{4}} B_{i_{4} i_{1}}}{( \lambda_{i_{2}} -  \lambda_{i_{1}}) ( \lambda_{i_{3}} -  \lambda_{i_{1}}) (\lambda_{i_{4}} -  \lambda_{i_{1}})}.
	\end{align*}
	Our goal is to prove that
	\begin{align*}
		(\ref{main_expr}) = 4 S_{+, -, -, -} + 4 S_{+, +, -, -} + 2 S_{+, -, +, -} + 4 S_{-, +, +, +} \geq 0.
	\end{align*}
	We shall in fact prove that
	\begin{align}
		S_{+, -, -, -} + S_{+, +, -, -} + S_{-, +, +, +} \geq &0 \label{harder_thing} \\
		\textrm{and}& \nonumber \\
		S_{+, -, +, -} \geq &0 \label{easier_thing}.
	\end{align}
	The inequalities (\ref{harder_thing}) and (\ref{easier_thing}) correspond to terms where the (cyclic) sign sequence changes sign $2$ and $4$ times, respectively. To prove (\ref{harder_thing}), first note that (simply relabel the indices, factor, and use the Hermitian property $B_{j, i} = \overline{B_{i, j}}$)
	\begin{align*}
		S_{+, -, -, -} =& \sum_{i > n_{1} \geq j} \frac{2 \lambda_{i}^{2}}{\lambda_{i} - \lambda_{j}}\left|\sum_{l = 1}^{n_{1}} \frac{B_{i, l} B_{l, j}}{\lambda_{i} - \lambda_{l}}\right|^{2} \\
		S_{-, +, +, +} =& \sum_{i > n_{1} \geq j} \frac{2 \lambda_{j}^{2}}{\lambda_{i} - \lambda_{j}}\left|\sum_{l = n_{1} + 1}^{n} \frac{B_{i, l} B_{l, j}}{\lambda_{l} - \lambda_{j}}\right|^{2}.
	\end{align*}
	In a similar vein,
	\begin{align*}
		S_{+, +, -, -} =& \sum_{i > n_{1} \geq j} \frac{-2 \lambda_{i} \lambda_{j}}{\lambda_{i} - \lambda_{j}} \left(\sum_{l = n_{1} + 1}^{n} \frac{B_{i, l} B_{l, j}}{\lambda_{l} - \lambda_{j}} \right) \overline{\left(\sum_{l = 1}^{n_{1}} \frac{B_{i, l} B_{l, j}}{\lambda_{i} - \lambda_{l}} \right)} \\
		+& \sum_{i > n_{1} \geq j} \frac{-2 \lambda_{i} \lambda_{j}}{\lambda_{i} - \lambda_{j}} \left(\sum_{l = 1}^{n_{1}} \frac{B_{i, l} B_{l, j}}{\lambda_{i} - \lambda_{l}} \right) \overline{\left(\sum_{l = n_{1} + 1}^{n} \frac{B_{i, l} B_{l, j}}{\lambda_{l} - \lambda_{j}} \right)}.
	\end{align*}
	But this just means that
	\begin{align}\label{form1}
		S_{+, -, -, -} + S_{+, +, -, -} + S_{-, +, +, +} = 2 \sum_{i > n_{1} \geq j} \frac{1}{\lambda_{i} - \lambda_{j}} \left|\sum_{l = 1}^{n_{1}} \frac{\lambda_{i} B_{i, l} B_{l, j}}{\lambda_{i} - \lambda_{l}} + \sum_{l = n_{1} + 1}^{n} \frac{ \lambda_{j} B_{i, l} B_{l, j}}{\lambda_{j} - \lambda_{l}} \right|^{2} \geq 0.
	\end{align}
	Quite similarly
	\begin{align}\label{form2}
		S_{+, -, +, -} =& \sum_{i_{1}, i_{2} = 1}^{n_{1}} (-\lambda_{i_{1}} - \lambda_{i_{2}}) \left|\sum_{l = n_{1} + 1}^{n} \frac{\lambda_{l} B_{i_{1}, l} B_{l, i_{2}}}{(\lambda_{i_{1}} - \lambda_{l})(\lambda_{i_{2}} - \lambda_{l})} \right|^{2} \\ \nonumber
		+& \sum_{i_{1}, i_{2} = n_{1} + 1}^{n} (\lambda_{i_{1}} + \lambda_{i_{2}}) \left|\sum_{l = 1}^{n_{1}} \frac{\lambda_{l} B_{i_{1}, l} B_{l, i_{2}}}{(\lambda_{i_{1}} - \lambda_{l})(\lambda_{i_{2}} - \lambda_{l})} \right|^{2} \geq 0.
	\end{align}
	The proof is thus complete.
\end{proof}

\begin{proof}[Proof of Theorem \ref{mainemb}]
    By Theorem \ref{posres} and Lemmas \ref{lemKol} and \ref{lem_decay} for any Hermitian $n \times n$ matrices $A, B$ with $B$ invertible there exists a finite measure on $S^{1}$ such that
    \begin{align*}
        \int_{S^{1}} |\langle v, w \rangle|^{3} d \mu (w) = \tr |v_{1} A + v_{2} B|^{3} = \|v_{1} A + v_{2} B\|_{3}^{3}.
    \end{align*}
    By simple approximation we see that we can drop the invertibility assumption; indeed on LHS one simply takes a limit point of measures of the approximant, which exists by compactness. For non-symmetric matrices, use the standard isometry trick
    \begin{align*}
        A \mapsto \frac{1}{2^{1/3}}\begin{bmatrix}
        0 & A \\
        A^{*} & 0
        \end{bmatrix}
    \end{align*}
    to note that any plane is isometric to a Hermitian plane. Finally for $A, B \in \S_{3}$, approximate with finite rank operators and apply the matrix case.
\end{proof}

\begin{kor}
    Hanner's inequality (\ref{hanner1}) holds when $p = 3$ and (in reverse when) $p = 3/2$.
\end{kor}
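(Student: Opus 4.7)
The plan is to reduce both inequalities to their classical $L_{p}$ counterparts. First I would treat the case $p = 3$ directly. By Theorem \ref{mainemb}, given any $A, B \in \S_{3}$, I can produce $f, g \in L_{3}(0, 1)$ such that
\begin{align*}
    \|\alpha A + \beta B\|_{\S_{3}} = \|\alpha f + \beta g\|_{L_{3}}
\end{align*}
for every $\alpha, \beta \in \R$. Specializing to $(\alpha, \beta) \in \{(1, 0), (0, 1), (1, 1), (1, -1)\}$ matches up all four $\S_{3}$-norms appearing in (\ref{hanner1}) with the corresponding $L_{3}$-norms of $f, g, f + g, f - g$. Plugging these identifications into Hanner's classical inequality for $L_{3}$ (the range $p \geq 2$ is due to Hanner \cite{hanner1956uniform}) yields (\ref{hanner1}) for $\S_{3}$.

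For the reverse inequality (\ref{hanner2}) at $p = 3/2$, the cleanest route is the duality argument already recorded as \cite[Lemma 6]{ball1994sharp}: that lemma shows that (\ref{hanner1}) for some exponent $p \geq 2$ implies (\ref{hanner2}) for the conjugate exponent $p' = p/(p - 1)$. Since $3' = 3/2$, the case just established gives the reverse inequality at $p = 3/2$ immediately. I would simply cite this step rather than reproduce the duality calculation.

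No serious obstacle is expected here: the bulk of the work is contained in Theorem \ref{mainemb}, and the corollary is a one-line transfer from $\S_{3}$ to $L_{3}$ plus the known Ball--Carlen--Lieb duality. The only small caveat is that Theorem \ref{mainemb} is stated for pairs in $\S_{3}$, while (\ref{hanner1}) and (\ref{hanner2}) in \cite{ball1994sharp} are stated for finite matrices; this is harmless because both sides of Hanner's inequality are continuous with respect to $\S_{p}$-convergence, and the finite-matrix case already suffices for the inequality in the Schatten class via a standard approximation.
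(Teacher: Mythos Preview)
Your proposal is correct and follows essentially the same route as the paper: invoke Theorem \ref{mainemb} to transport the four norms in (\ref{hanner1}) to $L_{3}$, apply Hanner's classical inequality there, and then obtain $p=3/2$ via the Ball--Carlen--Lieb duality \cite[Lemma~6]{ball1994sharp}. The paper's own proof is the same argument written with an isometry $T$ rather than the functions $f,g$; your added remark about approximation is harmless extra caution.
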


\begin{proof}
    Take linear isometric embedding $T : \vspan_{\R}\{A, B\} \mapsto L_{3}$. Now by the isometry property and linearity
    \begin{align*}
        & \|A + B\|_{\S_3}^{3} + \|A - B\|_{\S_3}^{3} \leq (\|A\|_{\S_3} + \|B\|_{\S_3})^{3} + |\|A\|_{\S_3} - \|B\|_{\S_3}|^{3} \\
        \Leftrightarrow & \|T A + T B\|_{3}^{3} + \|T A - T B\|_{3}^{3} \leq (\|T A\|_{3} + \|T B\|_{3})^{3} + |\|TA\|_{3} - \|T B\|_{3}|^{3}.
    \end{align*}
    But the second inequality follows from Hanner's original result for $L_{p}$ \cite{hanner1956uniform}. The case $p = 3/2$ follows from duality (see \cite[Lemma $6$]{ball1994sharp}).
\end{proof}

\begin{proof}[Proof of Theorem \ref{negthm}]
    Assume first that $p$ is not an even integer. Towards a contradiction, assume that there exists a measure $\mu$
    \begin{align*}
        \left\|
        \begin{bmatrix}
            z + x & y \\
            y & z - x
        \end{bmatrix}
        \right\|_{\S_{p}} = \left|z + \sqrt{x^{2} + y^{2}}\right|^{p} + \left|z - \sqrt{x^{2} + y^{2}}\right|^{p} = \int_{S^{2}} |x t_{1} + y t_{2} + z t_{3}|^{p} d \mu (t_{1}, t_{2}, t_{3}).
    \end{align*}
    Setting $(x, y) = (r \cos(\theta), r \sin(\theta))$ we see that
    \begin{align}\label{3spaceformula}
        \left|z + r\right|^{p} + \left|z - r\right|^{p} = \int_{S^{2}} |r(\cos(\theta) t_{1} + \sin(\theta) t_{2}) + z t_{3}|^{p} d \mu (t_{1}, t_{2}, t_{3}).
    \end{align}
    Note however that by the uniqueness (for $2$-dimensional $L_{p}$-spaces), the pushforward $f_{*}(\mu)$ with
    \begin{align*}
        f :(t_{1}, t_{2}, t_{3}) \mapsto \frac{(\cos(\theta) t_{1} + \sin(\theta) t_{2}, t_{3})}{\|(\cos(\theta) t_{1} + \sin(\theta) t_{2}, t_{3})\|_{2}}
    \end{align*}
    should map the support of $\mu$ to $(\pm 1/\sqrt{2}, \pm 1/\sqrt{2})$, which clearly cannot be true for every $\theta$.
    
    Consider then an even integer $p \geq 4$. Again, it is enough to refute the existence of measure $\mu$ on $S^{2}$ such that (\ref{3spaceformula}) holds. Expanding both sides around $r = 0$ and averaging over $\theta \in [0, 2 \pi]$ we see that
    \begin{align*}
        \mu(t_{3}^{p}) &= 2 \\
        \mu(t_{3}^{p - 2} (1 - t_{3}^{2}))) &= 4 \\
        \mu(t_{3}^{p - 4} (1 - t_{3}^{2})^{2}) &= \frac{16}{3}.
    \end{align*}
    But now $\mu((2 t_{3}^{2} - 1)^{2} t_{3}^{p - 4}) = -2/3$, which is impossible.
\end{proof}

\subsection{Proofs of Theorems \ref{der4} and \ref{der3}}

\begin{proof}[Proof of Theorem \ref{der4}]
    By \cite[Corollary 8]{bullen1971criterion}, functions with non-negative 4th derivative are (on every compact interval) pointwise limits of functions of the form
    \begin{align}\label{der4discrete}
        t \mapsto p(t) + \sum_{i} w_{i} |t - c_{i}|^{3},
    \end{align}
    where $p$ is a polynomial of degree at most $3$ and $w_{i} \geq 0$. But Theorem \ref{posres} readily implies that for any such function the resulting trace function has non-negative 4th derivative, at least if $B$ is invertible. Taking limit along invertible approximants of $B$ and approximants of the form (\ref{der4discrete}) yields the claim.
\end{proof}

To prove Theorem \ref{der3} we modify proof of Theorem \ref{posres}.

\begin{lause}\label{posres3}
	For any Hermitian $n \times n$ matrices $A$ and $B$, with $B$ positive definite, the function
	\begin{align*}
		T := t \mapsto \tr |A + t B| (A + t B)
	\end{align*}
	is analytic outside finitely many points where $A + t B$ is singular. Outside these points the function has non-negative $3$rd derivative. At the singular points the function behaves like
	\begin{align*}
		C |t - c| (t - c) + D |t - c| (t - c)^{2} + f(t),
	\end{align*}
	where $C \geq 0$ and $f$ is $C^{3}$ near $c$. Consequently the $3$rd derivative has non-negative multiple of $\delta$ measure at the singular points is therefore altogether non-negative measure.
\end{lause}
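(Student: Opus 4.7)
The plan is to parallel the proof of Theorem \ref{posres} but with $k = 3$ and $f(x) = x|x|$ (so $f'(x) = 2|x|$) in Lemma \ref{derivative_identity}, in place of $k = 4$ and $f = |\cdot|^3$. The analyticity statement and the finiteness of singular points carry over verbatim: they rest only on Rellich's analyticity of the eigenvalues of $A + tB$ and the fact that $\det(A + tB)$ is a non-zero polynomial in $t$.

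For the local expansion at a singular point $c$, the role of positive definiteness of $B$ becomes visible already. By the Hellmann--Feynman formula $\lambda_i'(t) = \langle v_i(t), B v_i(t) \rangle$, every analytic eigenvalue is strictly increasing, so any $\lambda_i$ with $\lambda_i(c) = 0$ has a simple zero with $\lambda_i'(c) > 0$. Writing $\lambda_i(t) = (t - c)\nu_i(t)$ with $\nu_i(c) > 0$, we get $|\lambda_i(t)|\lambda_i(t) = (t - c)|t - c|\,\nu_i(t)^2$, and Taylor-expanding $\nu_i(t)^2$ yields exactly the claimed shape with $C = \sum_{\lambda_i(c) = 0} \lambda_i'(c)^2 \geq 0$, the higher-order terms $(t - c)^k|t - c|$ with $k \geq 3$ being $C^3$ and thus absorbable into $f$.

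For the interior non-negativity, I would first compute the third-order divided differences of $|\cdot|$: they vanish on same-sign tuples and, on mixed-sign tuples $\{a_1, a_2, b\}$ or $\{a, b_1, b_2\}$ with $a_i < 0 < b_j$, admit simple closed forms $\tfrac{2b}{(b-a_1)(b-a_2)}$ and $\tfrac{-2a}{(b_1 - a)(b_2 - a)}$ respectively, in direct analogy with (\ref{mid1})--(\ref{mid2}). I would then split the triple sum in (\ref{main_expr}) by the sign pattern of $(\lambda_{i_1}, \lambda_{i_2}, \lambda_{i_3})$. The cyclic invariance of $B_{i_1 i_2} B_{i_2 i_3} B_{i_3 i_1}$ combined with the full symmetry of divided differences collapses the six nontrivial patterns into two pattern classes (``two minuses and a plus'' versus ``two pluses and a minus''). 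The key structural observation is that each of these class sums factors as a manifestly non-negative quadratic form sitting on a principal block of $B$; explicitly
\[
    S_{+, -, -} = 4 \sum_{i > n_1} \lambda_i \, \langle w^{(i)}, B^{(1)} w^{(i)} \rangle, \qquad w^{(i)}_j = \frac{\overline{B_{ij}}}{\lambda_i - \lambda_j},
\]
where $B^{(1)}$ is the block of $B$ on the negative eigenspace of $A$, and there is an analogous expression for $S_{-, +, +}$ using $B^{(2)}$ and outer weights $-\lambda_j > 0$. Positive definiteness of $B$ makes these principal blocks positive semidefinite, concluding both non-negativities.

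The main conceptual obstacle, and the reason the hypothesis $B \geq 0$ is unavoidable here, is precisely this factorisation: in Theorem \ref{posres} the analogous grouping produced pure absolute squares of complex linear combinations of the $B_{ij}$ (no $B$-matrix sandwiched inside), so Hermitian invertibility sufficed; here each factorised quadratic form leaves an entry $B_{jk}$ in the middle, and one genuinely needs positive semidefiniteness of the relevant principal block of $B$. This mirrors the odd/even parity distinction in Conjecture \ref{der_conjecture} and is the only delicate point of the argument.
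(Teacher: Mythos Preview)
Your argument is correct and is essentially the paper's own proof: the same divided-difference identities for $|\cdot|$, the same sign-pattern decomposition, and the same recognition of each nontrivial class as $\sum (\pm\lambda_i)\,\langle B u_i, u_i\rangle$ (the paper writes the full $B$ with vectors $u_i$ supported in one eigenblock, which is equivalent to your principal-block formulation $\langle B^{(1)} w^{(i)}, w^{(i)}\rangle$). Your Hellmann--Feynman treatment of the singular points is in fact more explicit than the paper's terse ``as in Theorem \ref{posres}'' and correctly isolates where $B>0$ is needed there; the constant $4$ in your displayed formula should be $2$.
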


\begin{proof}
    The non-smooth part can be handled as in the proof of Theorem \ref{posres}. Thanks to Lemma \ref{derivative_identity}, we only need to understand
    \begin{align*}
        \sum_{i_{1} = 1}^{n} \sum_{i_{2} = 1}^{n} \sum_{i_{3} = 1}^{n} [\lambda_{i_{1}}, \lambda_{i_{2}}, \lambda_{i_{3}}]_{|\cdot|} B_{i_{1}, i_{2}} B_{i_{2}, i_{3}} B_{i_{3}, i_{1}}.
    \end{align*}
    \begin{lem}
        Let $a_{1}, a_{2}, a_{3} < 0 < b_{1}, b_{2}, b_{3}$. Then we have the following identities:
        \begin{align*}
            [a_{1}, a_{2}, a_{3}]_{|\cdot|} &= 0 \\
            [a_{1}, a_{2}, b_{1}]_{|\cdot|} &= \frac{2 b_{1}}{(b_{1} - a_{1}) (b_{1} - a_{2})} \\
            [a_{1}, b_{1}, b_{2}]_{|\cdot|} &= \frac{-2 a_{1}}{(b_{1} - a_{1}) (b_{2} - a_{1})} \\
            [b_{1}, b_{2}, b_{3}]_{|\cdot|} &= 0
        \end{align*}
    \end{lem}
    \begin{proof}
        Straightforward calculation.
    \end{proof}
    Assume again that the first $n_{1}$ eigenvalues of $A$ are negative and the rest $n_{2}$ are positive. By cyclicity it suffices to understand
    \begin{align*}
        S_{-, +, +} &= \sum_{i_{1} = 1}^{n_{1}} \sum_{i_{2} = n_{1} + 1}^{n} \sum_{i_{3} = n_{1} + 1}^{n} \frac{-2 \lambda_{i_{1}}}{(\lambda_{i_{2}} - \lambda_{i_{1}}) (\lambda_{i_{3}} - \lambda_{i_{1}})} B_{i_{1}, i_{2}} B_{i_{2}, i_{3}} B_{i_{3}, i_{1}} \text{ and} \\
        S_{+, -, -} &= \sum_{i_{1} = n_{1} + 1}^{n} \sum_{i_{2} = 1}^{n_{1}} \sum_{i_{3} = 1}^{n_{1}} \frac{2 \lambda_{i_{1}}}{(\lambda_{i_{1}} - \lambda_{i_{2}}) (\lambda_{i_{1}} - \lambda_{i_{3}})} B_{i_{1}, i_{2}} B_{i_{2}, i_{3}} B_{i_{3}, i_{1}}.
    \end{align*}
    But these are both non-negative as we can write
    \begin{align*}
        S_{-, +, +} = 2\sum_{i \leq n_{1}} -\lambda_{i} \langle B v_{i}, v_{i} \rangle \\
        S_{+, -, -} = 2\sum_{i > n_{1}} \lambda_{i} \langle B u_{i}, u_{i} \rangle,
    \end{align*}
    where
    \begin{align*}
        v_{i} = \sum_{j > n_{1}} \frac{B_{j, i}}{\lambda_{j} - \lambda_{i}} e_{j} \\
        u_{i} = \sum_{j \leq n_{1}} \frac{B_{j, i}}{\lambda_{i} - \lambda_{j}} e_{j},
    \end{align*}
    and $e_{i}$ is the eigenvector of $A$ corresponding to eigenvalue $\lambda_{i}$.
\end{proof}

\begin{proof}[Proof of Theorem \ref{der3}]
    Make obvious modifications to the proof of Theorem \ref{der4}.
\end{proof}

\section{Discussion}\label{discuss}

It would be extremely interesting to know whether conjecture \ref{mainconj} is true in general. In addition to the case $p = 3$, it is also clearly true for $p = 2$ ($\S_{2}$ is Hilbert space) and $p = \infty$ ($L_{\infty}$ is universal). The case $p = 1$ is also well known; every $2$-dimensional normed space is isometric to a subspace of $L_{1}$, as observed by Lindenstrauss \cite[Corollary $2$]{lindenstrauss1964extension}.

In the following discussion we shall assume that $A$ and $B$ are Hermitian $n \times n$ matrices.

\subsection{$p$ is not an integer}

To prove conjecture \ref{mainconj}, one might try to modify the proof of Theorem \ref{mainemb}. Lemma \ref{lemKol} has generalization as follows. Define
\begin{align*}
    c_{p} = 2^{p + 1} \pi^{1/2} \Gamma((p + 1)/2)/\Gamma(-p/2).
\end{align*}
\begin{lem}\label{generalLemKol}
    Let $p \geq 1$ be not an even integer and let $\|\cdot\|$ be a norm in $\R^{2}$ such that for any $v, w \in \R^{2}$ the $(p + 1)$:st distributional derivative of
	\begin{align*}
		t \mapsto (1/c_{p}) \|t v + w\|^{p}
	\end{align*}
	is a finite positive Borel measure $\mu$, for which $\mu(|x|^p) < \infty$. Then $\|\cdot\|$ is linearly isometrically embeddable into $L_{p}$.
\end{lem}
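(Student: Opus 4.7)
The plan is to follow the proof of Lemma \ref{lemKol} almost verbatim, replacing the order-$4$ distributional derivative there with the order-$(p+1)$ derivative, and appealing to the underlying result of Koldobsky at general $p$ rather than $p = 3$ only. The role of $c_p$ is as the normalization making the $(p+1)$-th distributional derivative of $|x|^p$ equal to $c_p \delta_0$ (a direct computation with $\Gamma$-functions, using $\Gamma(-p/2)$ as the analytic continuation, recovers the closed form given); under this normalization the positive measure $\mu$ in the hypothesis matches the $L_p$-representing measure pushed to $\R$ in each fixed direction, not merely a scalar multiple of it.

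First I would apply Theorem $5$ of \cite{koldobsky1992generalized} together with Remark $1$ thereafter, stated for general non-even-integer $p$. The hypothesis yields a decomposition
\begin{align*}
\|xv + yw\|^p = \|xv + yw\|^p_{p, v, w} + h_v(x) + h_w(y),
\end{align*}
where $\|\cdot\|_{p, v, w}$ is an $L_p$-norm on $\R^2$ and $h_v, h_w$ satisfy $h_v(0) = 0 = h_w(0)$, with $h_v$ polynomial and $h_w$ polynomial up to a possible $\alpha|y|^p$ contribution. Restricting to the line $y = 0$ gives $|x|^p \|v\|^p = C|x|^p + h_v(x)$; since $|x|^p$ is not polynomial for $p$ not an even integer, this forces $h_v \equiv 0$. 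Symmetrically $h_w(y) = \alpha|y|^p$ for some $\alpha \in \R$, and this term is precisely the contribution of a Dirac mass at $w/\|w\| \in S^1$.

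Hence for the given pair $(v, w)$ the norm admits a representation
\begin{align*}
\|\cdot\|^p = \int_{S^1} |\langle \cdot, u\rangle|^p \, d\mu(u)
\end{align*}
with $\mu$ a finite signed Borel measure positive away from at most one exceptional point of $S^1$ depending on $(v, w)$. Varying $(v, w)$ produces representations with different exceptional points, and by the uniqueness statement \cite[Theorem 2]{koldobskii1991convolution} --- available precisely because $p$ is not an even integer --- all these representing measures coincide. The common measure must therefore be nonnegative everywhere on $S^1$, and $\|\cdot\|$ embeds linearly isometrically into $L_p$.

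The main obstacle is the bookkeeping behind the first step. For non-integer $p$ the $(p+1)$-th distributional derivative has to be interpreted via fractional calculus or Fourier transforms of tempered distributions, and one has to confirm that the hypothesis translates correctly into the input of Theorem $5$ of \cite{koldobsky1992generalized}, and that the non-polynomial remainder surviving after the line restriction has exactly the form $\alpha|y|^p$ (rather than a more general tempered distribution supported at a single point). Once this technical bookkeeping is secured, the remainder of the argument carries over unchanged from the $p = 3$ case.
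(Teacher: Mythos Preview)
The paper does not actually prove Lemma~\ref{generalLemKol}: in Section~\ref{discuss} it states explicitly that ``the proof of Lemma~\ref{generalLemKol} is omitted, since we don't know how to generalize the remaining steps'' (the ``remaining steps'' referring to the analogues of Theorem~\ref{posres} and Lemma~\ref{lem_decay}, not to the lemma itself). So there is no paper proof to compare against.

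That said, your proposal is exactly the natural generalization of the paper's own proof of Lemma~\ref{lemKol}, and is almost certainly what the author had in mind: invoke Koldobsky's Theorem~5 and Remark~1 at general non-even-integer $p$, kill $h_v$ by restricting to $y=0$ and using that $|x|^p$ is not a polynomial, reduce $h_w$ to $\alpha|y|^p$ by restricting to $x=0$, and then invoke uniqueness of the representing measure (valid precisely for $p$ not an even integer) while varying $(v,w)$ to rule out the one possibly negative atom. Your caveat about the fractional-derivative bookkeeping is also well placed --- the paper defines the order-$\alpha$ derivative via $\widehat{(|\cdot|^\alpha \hat f)}$ and notes that for odd integer $p$ this agrees with the usual distributional derivative up to a constant, which is essentially the content you flag as needing verification to match Koldobsky's hypotheses.
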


Here by distributional derivative of $f$ order $\alpha$ we mean the distribution $\widehat{(|\cdot|^{\alpha} \hat{f})}$ ($\hat{\cdot}$ stands for the Fourier transform). Exact value of the constant $c_{p}$ here is of course irrelevant. Only the sign matters, which (that of $\Gamma(-p/2)$) happens to be $+/-$ depending on if $\lfloor p/2 \rfloor$ is odd/even. The proof of Lemma \ref{generalLemKol} is omitted, since we don't know how to generalize the remaining steps.

\subsection{$p$ is an odd integer}

Lemma \ref{generalLemKol} is particularly convenient if $p$ is odd integer as then $(p + 1)$:st fractional derivative corresponds to the usual (distributional) derivative, up to a constant. For odd integers $p > 3$ one has expressions similar to (\ref{main_expr}); Lemma \ref{lem_decay} goes also through without noticeable change. The main difficulty then is to find expressions similar to (\ref{form1}) and (\ref{form2}). We have partial work, which suggests that the Conjecture \ref{mainconj} is true for $p = 5$.

If one restricts attention to $2 \times 2$ matrices, the case of an odd integer can be checked without too much trouble. Indeed, one checks that
\begin{align*}
    (\tr |A + t B|^{p})^{(p + 1)}(0) = \sum_{\substack{i, j \geq 0 \\ 2 i + j = p - 1}} \frac{(p - 1)!}{j! i! (i + 1)!} \left(\frac{B_{1,1} \lambda_{2} - B_{2,2} \lambda_{1}}{\lambda_{2} - \lambda_{1}}\right)^{j} B_{1, 2}^{2 i + 2} \left(\frac{- \lambda_{1} \lambda_{2}}{(\lambda_{2} - \lambda_{1})^{2}}\right)^{i}
\end{align*}
whenever $\lambda_{1} < 0 < \lambda_{2}$. Here all the $(p + 1)/2$ terms are clearly non-negative, and, akin to the (general $n$) case of $p = 3$, they correspond to different numbers of sign changes for the patterns introduced in the proof Theorem \ref{posres}. Sadly for $p > 3$ and general $n$ such decomposition to $(p + 1)/2$ non-negative parts is not possible.

\subsection{$p$ is an even integer}

These cases are somewhat special. While now there's no more uniqueness in the representation (\ref{lp_repr}), one can simply expand $\|A + t B \|_{\S_{p}}^{p}$ as a polynomial in $t$. Equating the coefficients (in $t$) of
\begin{align*}
    \int_{0}^{2 \pi} |\cos(\theta) + \sin(\theta) t|^{p} d \mu(\theta) = \tr |A + t B|^{p},
\end{align*}
one ends up with expressions of the Fourier coefficients of $\mu$, $\hat{\mu}(0), \hat{\mu}(2), \hat{\mu}(4), \ldots, \hat{\mu}(p)$ in terms of traces of non-commutative polynomials of $A$ and $B$. Since we assume that $\mu$ is symmetric, we may instead think about $\mu$ on $[0, \pi]$ in which case one obtains expressions for, $\hat{\mu}(0), \hat{\mu}(1), \hat{\mu}(2), \ldots, \hat{\mu}(p/2)$

For $p = 2$, this yields
\begin{align*}
    \hat{\mu}(0) &= \tr(A^2) + \tr(B^2) \\
    \hat{\mu}(1) &= \tr(A^2) - \tr(B^2) + 2i \tr(AB).
\end{align*}
The necessary and sufficient condition for the existence of $\mu$ ($\hat{\mu}(0) \geq |\hat{\mu}(1)|$) simplifies to
\begin{align*}
    \tr(A B)^{2} \leq \tr(A)^2 \tr(B)^2,
\end{align*}
Cauchy--Schwarz inequality for the inner product $\langle A, B \rangle = \tr(A B)$.

For $p = 4$ the situation is more complicated. One has
\begin{align*}
    \hat{\mu}(0) &= \tr(A^4) + \frac{2}{3}\left(2 \tr(A^2 B^{2}) + \tr(A B A B) \right) + \tr(B^{4}) \\
    \hat{\mu}(1) &= \tr(A^4) - \tr(B^{4}) + 2 i \left(\tr(A^3 B) + \tr(A B^{3}) \right) \\
    \hat{\mu}(2) &= \tr(A^4) + \tr(B^{4}) - 2 \left(2 \tr(A^2 B^{2}) + \tr(A B A B) \right) + 4 i \left(\tr(A^3 B) - \tr(A B^{3}) \right).
\end{align*}
Existence of the measure $\mu$ is now equivalent to (see for instance \cite[Theorem 1.3.6]{bakonyi2011matrix})
\begin{align*}
    \begin{bmatrix}
        \hat{\mu}(0) & \hat{\mu}(1) & \hat{\mu}(2) \\
        \overline{\hat{\mu}(1)} & \hat{\mu}(0) & \hat{\mu}(1) \\
        \overline{\hat{\mu}(2)} & \overline{\hat{\mu}(1)} & \hat{\mu}(0)
    \end{bmatrix} \geq 0,
\end{align*}
i.e. $\mu(p) \geq 0$ for any trigonometric polynomial $p$ of degree 2 non-negative on $S^{1}$. By \cite[Theorem 1.1.7]{bakonyi2011matrix} it is enough to check such polynomials with roots on the unit circle, namely that
\begin{align*}
    0 \leq & \mu(\theta \mapsto (e^{i \theta} - e^{i \theta_{1}}) (e^{i \theta} - e^{i \theta_{2}}) (e^{-i \theta} - e^{-i \theta_{1}}) (e^{-i \theta} - e^{-i \theta_{1}})) \\
    =& (4 + e^{i (\theta_{1} - \theta_{2})} + e^{i (\theta_{2} - \theta_{1})}) \hat{\mu}(0) \\
    &- 2 (e^{-i \theta_{1}} + e^{-i \theta_{2}}) \hat{\mu}(1) - 2 (e^{i \theta_{1}} + e^{i \theta_{2}}) \overline{\hat{\mu}(1)} \\
    &+ e^{-i (\theta_{1} + \theta_{2})} \hat{\mu}(2) + e^{i (\theta_{1} + \theta_{2})} \overline{\hat{\mu}(2)}
\end{align*}
for any $\theta_{1}, \theta_{2} \in \R$.

But since this can be expressed as
\begin{align*}
    & (2 - e^{i \theta_{1}} - e^{-i \theta_{1}}) (2 - e^{i \theta_{2}} - e^{-i \theta_{2}}) \tr(A^{4}) \\
    + & 4 i \left( e^{i \theta_{1}} + e^{i \theta_{2}} - e^{i (\theta_{1} + \theta_{2})} - e^{-i \theta_{1}} - e^{-i \theta_{2}} + e^{-i (\theta_{1} + \theta_{2})} \right) \tr(A^{3} B) \\
    +& \left(8 + 2 (e^{i (\theta_{1} - \theta_{2})} + e^{i (\theta_{2} - \theta_{1})}) - 6 (e^{i (\theta_{1} + \theta_{2})} + e^{-i (\theta_{2} + \theta_{1})}) \right)\frac{2 \tr(A^2 B^2) + \tr(A B A B)}{3}\\
   + & 4 i \left( e^{i \theta_{1}} + e^{i \theta_{2}} + e^{i (\theta_{1} + \theta_{2})} - e^{-i \theta_{1}} - e^{-i \theta_{2}} - e^{-i (\theta_{1} + \theta_{2})} \right) \tr(A B^{3}) \\
   + & (2 + e^{i \theta_{1}} + e^{-i \theta_{1}}) (2 + e^{i \theta_{2}} + e^{-i \theta_{2}}) \tr(A^{4}) \\
   =& \|(e^{i \theta_{1}} - 1) (e^{i \theta_{2}} - 1) A^{2} - i (e^{i (\theta_{1} + \theta_{2})} - 1) (AB + BA) -(e^{i \theta_{1}} + 1) (e^{i \theta_{2}} + 1) B^{2}  \|_{\S_{2}}^{2} \\
   +& \frac{|e^{i \theta_{1}} - e^{i \theta_{2}}|^{2}}{3} \|A B - B A\|_{\S_{2}}^{2} \\
   \geq & 0,
\end{align*}
we obtain 
\begin{lause}
     For any $A, B \in \S_{4}$ the real space $\vspan_{\R}\{A, B\}$ can be linearly isometrically embedded into $L_{4}$.
\end{lause}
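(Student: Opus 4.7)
The plan is to execute the Fourier-analytic strategy developed in the preceding discussion. First I would reduce to finite-dimensional Hermitian matrices: density of finite-rank operators in $\S_4$ handles the infinite-dimensional reduction, and the standard doubling trick $A\mapsto 2^{-1/4}\begin{bmatrix}0 & A\\A^*&0\end{bmatrix}$ reduces the non-Hermitian case to the Hermitian one. It then suffices to produce, given Hermitian $n\times n$ matrices $A,B$, a symmetric positive Borel measure $\mu$ on $S^{1}$ such that
\begin{equation*}
\tr(A+tB)^{4} \;=\; \int_{0}^{2\pi}|\cos\theta + t\sin\theta|^{4}\,d\mu(\theta) \qquad \text{for every } t\in\R,
\end{equation*}
for then the assignment $\alpha A+\beta B\mapsto (\theta\mapsto \alpha\cos\theta+\beta\sin\theta)$ gives an isometric embedding into $L_{4}(S^{1},\mu)$, hence into $L_{4}$.

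Since both sides above are polynomials of degree at most $4$ in $t$, the identity amounts to matching five real scalars, which translates to prescribing the Fourier coefficients $\hat\mu(0),\hat\mu(1),\hat\mu(2)$ of the symmetric measure $\mu$ in terms of the six traces $\tr(A^{4}),\tr(A^{3}B),\tr(A^{2}B^{2}),\tr(ABAB),\tr(AB^{3}),\tr(B^{4})$ as displayed in the preceding calculation. By Carath\'eodory's theorem, a symmetric measure on $S^{1}$ with these prescribed Fourier data exists if and only if the associated $3\times 3$ Toeplitz matrix $M=[\hat\mu(j-k)]_{j,k=0}^{2}$ is positive semidefinite. By the standard reduction to trigonometric polynomials with all roots on the unit circle, this PSD-ness is implied once $\mu$ evaluates non-negatively on every test polynomial of the form $|(\zeta-e^{i\theta_{1}})(\zeta-e^{i\theta_{2}})|^{2}$ with $\theta_{1},\theta_{2}\in\R$.

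The heart of the argument, and the main obstacle, is to rewrite this test quantity---a degree-$4$ expression in $A,B$ depending on two angular parameters---as a manifest sum of squared $\S_{2}$ norms. I would search for a decomposition of the shape $\|P\|_{\S_{2}}^{2}+c\,\|Q\|_{\S_{2}}^{2}$ with $P,Q$ non-commutative polynomials of degree $2$ in $A,B$ whose coefficients depend on $\theta_{1},\theta_{2}$. A single square whose $P$ is a linear combination of $A^{2}$, $AB+BA$, and $B^{2}$ naturally produces all of the required traces \emph{except} the combination $\tr(A^{2}B^{2})-\tr(ABAB)$, so this missing piece must be supplied by the commutator square $\|AB-BA\|_{\S_{2}}^{2}=2\bigl(\tr(A^{2}B^{2})-\tr(ABAB)\bigr)$. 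Guessing the angular coefficients of $P$ from the trace-of-$A^{4}$ and trace-of-$B^{4}$ contributions, then tuning the commutator coefficient (which works out to $\tfrac{1}{3}|e^{i\theta_{1}}-e^{i\theta_{2}}|^{2}$), and finally verifying the resulting identity by a direct but lengthy trace expansion---exactly the computation carried out in the long display in the excerpt---is the non-trivial content. Once this identity is confirmed, both terms on the right are manifestly non-negative, so $M\ge 0$, and the measure $\mu$ exists.
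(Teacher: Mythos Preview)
Your proposal is correct and follows the paper's argument essentially verbatim: compute $\hat\mu(0),\hat\mu(1),\hat\mu(2)$ from the trace expansion, invoke the Carath\'eodory--Toeplitz criterion, reduce to test polynomials with roots on $S^{1}$, and verify positivity via the same sum-of-squares identity $\|P\|_{\S_2}^{2}+\tfrac{1}{3}|e^{i\theta_1}-e^{i\theta_2}|^{2}\|AB-BA\|_{\S_2}^{2}$. The only additions are the routine reductions (finite rank density, Hermitian doubling) that the paper states for $p=3$ but leaves implicit here.
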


Similar but more complicated argument can be carried out for $p = 6$, but it seems that such sum-of-squares expression is impossible for $p = 8$.

\subsection{Stahl's Theorem}

We give a quick proof of Stahl's result \ref{stahl} given Conjecture \ref{der_conjecture}.

\begin{proof}[Proof of Theorem \ref{stahl}]
    By the result of Bernstein on completely monotone functions (see \cite[Theorem 1.4]{schilling2012bernstein}) Theorem is equivalent to the function
    \begin{align*}
        t \mapsto \tr(\exp(A + t B))
    \end{align*}
    having non-negative $k$'th derivative for any $k \geq 0$. But since exponential function has non-negative $k$'th derivative for any $k$, Conjecture \ref{der_conjecture} immediately implies the claim.
\end{proof}

\subsection{Further questions}

We finish with two questions.

\begin{kys}
    Let $X$ be a subspace of $\S_{p}$. We say that $X$ is of $p$-width $k$, if it is finitely representable\footnote{$X$ is said to finitely representable in $Z$ if for any finite dimensional subspace $Y$ of $X$ and $\varepsilon > 0$ there exists a subspace $Z_{Y} \subset Z$ and isomorphism $T : Y \to Z_{Y}$ such that $\|T\| \|T^{-1}\| \leq 1 + \varepsilon$.} in $(\oplus_{i = 1}^{\infty} \S_{p}^{k})_{p}$. Does there exist $1 \leq p < \infty$, $p \neq 2$, and $m, k \in \N$ with $m \geq 3$ such that any $m$-dimensional subspace of $\S_{p}$ has $p$-width at most $k$?
\end{kys}

\begin{kys}
    Let $A, B \in \S_{p}$ (not necessarily Hermitian). Does there exist $f, g \in L_{1}((0, 1), \C)$ such that
    \begin{align*}
        \|\alpha A + \beta B\|_{\S_{p}} = \|\alpha f + \beta g\|_{p}
    \end{align*}
    for any $\alpha, \beta \in \C$? In other words, is $\vspan_{\C} \{A, B\}$ complex linearly isometric to a subspace of complex $L_{p}$?
\end{kys}

\subsection*{Acknowledgements}

The author wishes to thank Assaf Naor for introducing the topic, and Assaf Naor and Victoria Chayes for helpful discussions.

\bibliography{schatten_hanner_3}

\begin{thebibliography}{10}

\bibitem{bakonyi2011matrix}
M.~Bakonyi and H.~J. Woerdeman.
\newblock Matrix completions, moments, and sums of hermitian squares.
\newblock In {\em Matrix Completions, Moments, and Sums of Hermitian Squares}.
  Princeton University Press, 2011.

\bibitem{ball1994sharp}
K.~Ball, E.~A. Carlen, and E.~H. Lieb.
\newblock Sharp uniform convexity and smoothness inequalities for trace norms.
\newblock {\em Inventiones mathematicae}, 115(1):463--482, 1994.

\bibitem{bessis1975monotonic}
D.~Bessis, P.~Moussa, and M.~Villani.
\newblock Monotonic converging variational approximations to the functional
  integrals in quantum statistical mechanics.
\newblock {\em Journal of Mathematical Physics}, 16(11):2318--2325, 1975.

\bibitem{bullen1971criterion}
P.~Bullen.
\newblock A criterion for $n$-convexity.
\newblock {\em Pacific Journal of Mathematics}, 36(1):81--98, 1971.

\bibitem{chayes2020matrix}
V.~M. Chayes.
\newblock Matrix rearrangement inequalities revisited.
\newblock {\em arXiv preprint arXiv:2009.04032}, 2020.

\bibitem{de2005divided}
C.~de~Boor.
\newblock Divided differences.
\newblock {\em arXiv preprint math/0502036}, 2005.

\bibitem{hanner1956uniform}
O.~Hanner.
\newblock On the uniform convexity of ${L}_p$ and $l_p$.
\newblock {\em Arkiv f{\"o}r Matematik}, 3(3):239--244, 1956.

\bibitem{hiai2010matrix}
F.~Hiai.
\newblock Matrix analysis: matrix monotone functions, matrix means, and
  majorization.
\newblock {\em Interdisciplinary Information Sciences}, 16(2):139--248, 2010.

\bibitem{koldobskii1991convolution}
A.~L. Koldobsky.
\newblock Convolution equations in certain {B}anach spaces.
\newblock {\em Proceedings of the American Mathematical Society},
  111(3):755--765, 1991.

\bibitem{koldobsky1992generalized}
A.~L. Koldobsky.
\newblock Generalized {L}{\'e}vy representation of norms and isometric
  embeddings into ${L}_p$-spaces.
\newblock In {\em Annales de l'IHP Probabilit{\'e}s et statistiques},
  volume~28, pages 335--353, 1992.

\bibitem{lindenstrauss1964extension}
J.~Lindenstrauss.
\newblock On the extension of operators with a finite-dimensional range.
\newblock {\em Illinois Journal of Mathematics}, 8(3):488--499, 1964.

\bibitem{mc1967c_rho}
C.~A. McCarthy.
\newblock ${C}_\rho$.
\newblock {\em Israel Math. J.}, 5:249--271, 1967.

\bibitem{neyman1984representation}
A.~Neyman.
\newblock Representation of $p$-norms and isometric embedding in $p$-spaces.
\newblock {\em Israel Journal of Mathematics}, 48(2):129--138, 1984.

\bibitem{petz1994survey}
D.~Petz.
\newblock A survey of certain trace inequalities.
\newblock {\em Banach Center Publications}, 30(1):287--298, 1994.

\bibitem{pisier1978some}
G.~Pisier.
\newblock Some results on {B}anach spaces without local unconditional
  structure.
\newblock {\em Compositio Mathematica}, 37(1):3--19, 1978.

\bibitem{rellich1969perturbation}
F.~Rellich and J.~Berkowitz.
\newblock {\em Perturbation theory of eigenvalue problems}.
\newblock CRC Press, 1969.

\bibitem{schilling2012bernstein}
R.~L. Schilling, R.~Song, and Z.~Vondracek.
\newblock Bernstein functions.
\newblock In {\em Bernstein functions}. de Gruyter, 2012.

\bibitem{stahl2013proof}
H.~R. Stahl.
\newblock Proof of the {BMV} conjecture.
\newblock {\em Acta mathematica}, 211(2):255--290, 2013.

\end{thebibliography}

\bibliographystyle{abbrv}

\end{document}